\newtheorem{theorem}{Theorem}[section]
\newtheorem{lemma}[theorem]{Lemma}
\numberwithin{equation}{section}
\def \le{{\,\leqslant\,}}
\def \ge{{\,\geqslant\,}}
\begin{document}
\title{Waring-Goldbach problem for unlike powers}

\author{Zhenzhen Feng}
\address{School of Mathematics and Statistics, Minnan Normal University, Zhangzhou 363000, P. R. China}
\email{fengzz13@mails.jlu.edu.cn}

\author{Jing Ma*}
\address{School of Mathematics, Jilin University, Changchun 130012, P. R. China}
\email{jma@jlu.edu.cn}

\thanks{*corresponding author}
\subjclass[2010]{11P32, 11P55}
\keywords{exceptional sets, Waring-Goldbach problem, circle method}

\begin{abstract}
In this paper, we investigate exceptional sets in the Waring-Goldbach problem for unlike powers.
For example, estimates are obtained for sufficiently large integers below a parameter subject to the necessary local conditions
that do not have a representation as the sum of
a square of prime, a cube of prime and a sixth power of prime and a $k$-th power of prime. These results improve the
recent result due to Br\"udern in the order of magnitude.
Furthermore, the method can be also applied to the similar estimates for the exceptional sets for Waring-Goldbach
problem for unlike powers.
\end{abstract}

\maketitle


\section{Introduction}
Let $N$, $k_1$, $k_2$, $\cdots$, $k_r$ be natural numbers such that $2 \leq k_1\leq k_2 \leq \cdots \leq k_r$.
The Waring-Goldbach problem for unlike powers concerns the representation of $N$ as the form
$$
N=p_1^{k_1}+p_2^{k_2}+\cdots+p_r^{k_r}.
$$
Not very much is known about results of this kind. However, these topics have attracted mathematicians' attentions.

Schwarz \cite{Schwarz} considered the exceptional set of expressing an positive even number as the sum of a square of prime,
a cube of prime, a sixth power of prime and a $k$-th power of prime, i.e.
\begin{align}\label{236k}
n=p_1^2+p_2^3+p_3^6+p_4^k,
\end{align}
where $p_1$, $p_2$, $p_3$, $p_4$ are primes. Let $E_1(k, N)$ be the number of positive even integers $n$ up to $N$ which can not be written in
the form (\ref{236k}). Exactly, Schwarz \cite{Schwarz} showed that $E_1(k, N)\ll N(\log N)^{-A}$ for any fixed $A>0$.
Recently, Br\"udern \cite{Brudern} improved this result and
established that $E_1(k, N)\ll N^{1-\frac{1}{8k^2}+\varepsilon}$. In this paper, we further improve the result of
Br\"udern by giving
\begin{theorem}\label{Thm236k}
Let $E_1(k, N)$ be defined as above. We have
$$
E_1(k, N)\ll N^{1-\theta_1(k)+\varepsilon},
$$
among which,
\begin{align*}
\theta_1(k)=
\begin{cases}
 \frac{1}{54}, & k=6,\\
 \frac{1}{81}, & k=7,\\
 \frac{1}{54x}, & k\ge 8,
\end{cases}
\end{align*}
where
\begin{align}\label{236kx}
x=
\begin{cases}
\left\lceil(\frac{k}{6}+1-[\frac{k}{6}])2^{[\frac{k}{6}]-1}\right\rceil, &  8\le k\le 23,\\
\left\lceil \frac{7k}{6}-20 \right\rceil, &  24\le k\le 29,\\
\left\lceil (\frac{k}{6}-\frac{1}{2}[\frac{k}{6}])([\frac{k}{6}]+1)\right\rceil, &  k\ge 30.
\end{cases}
\end{align}
Here $\lceil a \rceil$ means the smallest integer no smaller than $a$ and $[a]$ means the biggest integer no bigger than $a$.
\end{theorem}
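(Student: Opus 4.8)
The plan is to run the Hardy--Littlewood circle method and convert the problem into a minor-arc mean-value estimate. Writing $e(t)=e^{2\pi it}$, putting $P_j=(N/4)^{1/j}$ and
\[
f_j(\alpha)=\sum_{p\le P_j}(\log p)\,e(\alpha p^{j}),
\]
the weighted number of representations of $n$ in the shape \eqref{236k} is $\int_0^1 f_2 f_3 f_6 f_k\,e(-n\alpha)\,d\alpha$. I would dissect $[0,1)$ into major arcs $\mathfrak M$ about rationals $a/q$ with $q\le Q=N^{\delta}$ and minor arcs $\mathfrak m$. On $\mathfrak M$ the usual pruning and singular-series computation, as in \cite{Schwarz} and \cite{Brudern}, gives for every even $n\in(N/2,N]$ satisfying the local solubility conditions a lower bound $\int_{\mathfrak M}f_2 f_3 f_6 f_k\,e(-n\alpha)\,d\alpha\gg \mathfrak S(n)N^{1/k}\gg N^{1/k-\varepsilon}$, the exponent reflecting $\tfrac12+\tfrac13+\tfrac16+\tfrac1k-1=\tfrac1k$. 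Hence each exceptional $n$ forces $\bigl|\int_{\mathfrak m}f_2 f_3 f_6 f_k\,e(-n\alpha)\,d\alpha\bigr|\gg N^{1/k-\varepsilon}$, and Bessel's inequality applied to these Fourier coefficients yields
\[
E_1(k,N)\,N^{2/k-\varepsilon}\ll \int_{\mathfrak m}|f_2 f_3 f_6 f_k|^2\,d\alpha.
\]
Everything thus reduces to saving a factor $N^{1/(54x)}$ over the trivial size $N^{1+2/k}$ of this integral.

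The core of the proof is the minor-arc estimate. The decisive device I would use is that the sixth power $p_3^6$ lets one treat the dominant sum $f_k$ at \emph{effective degree} about $k/6$ rather than $k$: pairing $k$-th powers against sixth powers makes a sharp estimate
\[
\int_0^1|f_k(\alpha)|^{2x}\,d\alpha\ll N^{2x/k-1+\varepsilon}
\]
available with far fewer variables than classical Hua differencing would demand, $x$ being the least admissible number of variables. I would then bound the minor-arc integral by H\"older's inequality, splitting off this sharp $f_k$-moment alongside low-degree moments controlled by Hua's inequality (for $\int_0^1|f_2|^4$ and the cube and sixth-power analogues) together with a Weyl/Vaughan pointwise bound on $\mathfrak m$ for the low-degree sums. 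After optimization, the fixed saving available from the square, cube and sixth-power sums survives as a net minor-arc gain of order $1/x$, which, combined with the factor $N^{-2/k}$ from Bessel's inequality, produces $E_1(k,N)\ll N^{1-1/(54x)+\varepsilon}$; the constant $54$ is fixed by the low-degree sums, while the division by $x$ is the cost of accommodating the high power.

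The three ranges of $x$ in \eqref{236kx} correspond to the sharpest mean-value method at effective degree $k/6$. For moderate $k$ (here $8\le k\le 23$) iterated Weyl/Hua differencing relative to the sixth power needs about $2^{[k/6]}$ variables, which is the origin of the factor $2^{[k/6]-1}$; for $24\le k\le 29$ a linear interpolation is superior, giving $\tfrac{7k}{6}-20$; and for $k\ge 30$ a Vinogradov-type mean value (efficient congruencing) at effective degree $k/6$ wins, requiring about $\tfrac12(k/6)^2$ variables, whence the product expression and the asymptotic $\theta_1(k)\asymp 4/(3k^2)$, improving Br\"udern's exponent $1/(8k^2)$. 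The two smallest values are treated individually: when $k=6$ the problem already contains a second sixth power, so $f_6^2$ is estimated directly with effectively $x=1$ and $\theta_1=1/54$; when $k=7$ a dedicated seventh-power moment gives $\theta_1=1/81$, better than the $1/108$ predicted by the general formula.

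The principal obstacle is establishing the sharp few-variable mean value for $f_k$ at effective degree $k/6$ while retaining a genuine power saving on $\mathfrak m$, since the final exponent degrades linearly in $x$ and any loss in the number of variables is costly. A secondary point is the simultaneous optimization of the dissection parameter $\delta$, the H\"older exponents, and the choice of moment in each range of $k$, which is what pins down the precise constants and the piecewise formula for $x$.
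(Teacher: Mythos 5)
Your overall skeleton (circle method, major-arc lower bound $\gg N^{1/k}$, Bessel's inequality, H\"older on the minor arcs, and three ranges of $x$ reflecting Hua-type versus Bourgain-type moments at ``effective degree'' $k/6$) does match the paper. But the estimate you place at the core of the argument,
\[
\int_0^1|f_k(\alpha)|^{2x}\,d\alpha\ll N^{\frac{2x}{k}-1+\varepsilon},
\]
with $x$ as in \eqref{236kx}, is false, and no ``pairing against sixth powers'' can repair it, because it is a statement about a pure moment of $f_k$ alone. For $k=8$ the formula gives $x=2$, so your claim reads $\int_0^1|f_8|^4\,d\alpha\ll N^{-1/2+\varepsilon}$, while the diagonal solutions of $p_1^8+p_2^8=p_3^8+p_4^8$ already force $\int_0^1|f_8|^4\,d\alpha\gg N^{1/4-\varepsilon}$; in general the claim contradicts the diagonal lower bound $\gg N^{x/k-\varepsilon}$ whenever $x<k$, and even for large $k$ (where $x\approx k^2/72$) a full saving of $N^{1}$ with so few variables lies far beyond Bourgain's theorem, which requires $x\sim k^2/2$. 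What the $x$-values of \eqref{236kx} actually deliver --- and all that the paper's Lemma 2.2 asserts --- is the modest saving $\int_0^1|S_k|^{2x}\,d\alpha\ll N^{2x/k-1/6+\varepsilon}$, i.e.\ $\delta=1/6$, obtained by H\"older interpolation between Hua's $2^j$-th and Bourgain's $j(j+1)$-th moments. The remainder of the saving comes from two ingredients absent from your proposal: (i) the \emph{mixed} mean value $\int_0^1|S_2^2S_3^2S_k^{2x}|\,d\alpha\ll N^{2x/k+2/3+\varepsilon}$ (Lemma 2.5), proved by counting solutions of $t_1^2-t_2^2+u_1^3-u_2^3=z_1^k+\cdots-z_{2x}^k$ via a divisor argument, the doubly diagonal case $t_1=t_2$, $u_1=u_2$ being exactly where the $\delta=1/6$ saving is spent; and (ii) the pointwise bound $\sup_{\alpha\in\mathfrak n}|S_3(\alpha)|\ll N^{1/3-1/36+\varepsilon}$ for the prime cube sum (Kumchev/Zhao). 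In the final H\"older step the cube sum carries exponent $2/(3x)$, and $\tfrac{2}{3x}\cdot\tfrac{1}{36}=\tfrac{1}{54x}$ is precisely where the constant $54$ comes from; it cannot be recovered from the ingredients you list (Hua's inequality for $\int_0^1|f_2|^4$ and analogues), so the exponent $1/(54x)$ in your conclusion is unsupported.

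There is a second, structural gap: power-saving pointwise bounds for prime Weyl sums are not available on all of $\mathfrak m$. The Kumchev/Zhao estimates carry a term $N^{1/j+\varepsilon}(q+N|q\alpha-a|)^{-1/2}$, which gives nothing near rationals with denominators of moderate size (up to $N^{1/8}$, say). For this reason the paper splits $\mathfrak m$ into an intermediate set $\mathfrak N\setminus\mathfrak M$ (arcs with $q\le N^{1/8}$), handled by Br\"udern's estimate with a saving of $N^{-1/(4k)}$, and the genuine minor arcs $\mathfrak n=\mathfrak m\setminus\mathfrak N$, on which alone the H\"older argument runs. Your single-tier dissection with a ``Weyl/Vaughan pointwise bound on $\mathfrak m$'' breaks down exactly on this intermediate set, so even after correcting the mean value you would still need this two-level decomposition of the minor arcs.
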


\noindent\textit{Remark:} We can compare the results of Theorem 1.1 with those of Br\"udern \cite{Brudern}. For example, we obtain
$E_1(6, N)\ll N^{1-\frac{1}{54}+\varepsilon}$ and $E_1(7, N)\ll N^{1-\frac{1}{81}+\varepsilon}$. Meanwhile, Br\"udern's results
indicated that $E_1(6, N)\ll N^{1-\frac{1}{288}+\varepsilon}$ and $E_1(7, N)\ll N^{1-\frac{1}{392}+\varepsilon}$.
In addition, for large value $k$, Theorem 1.1 gives that $E_1(k, N)\ll N^{1-\frac{1}{\frac{3}{4}k^2+O(k)}+\varepsilon}$,
whereas Br\"udern's result \cite{Brudern} showed that $E_1(k, N)\ll N^{1-\frac{1}{8k^2}+\varepsilon}$.

In the same paper \cite{Schwarz}, Schwarz also considered the problem of representing a large even integer $n$ in the form
\begin{align}\label{244k}
n=p_1^2+p_2^4+p_3^4+p_4^k,
\end{align}
where $p_1, p_2, p_3, p_4$ are primes.
Let $E_2(k, N)$ denote the number of positive even integers $n$ up to $N$ which can not be written in the
form (\ref{244k}). In fact, Schwarz \cite{Schwarz} proved that $E_2(k, N)\ll N(\log N)^{-A}$ for any fixed $A>0$.
Using the similar method to treat Theorem \ref{Thm236k}, we obtain the following result:
\begin{theorem}\label{Thm244k}
Let $E_2(k, N)$ be defined as above. We have
$$
E_2(k, N)\ll N^{1-\theta_2(k)+\varepsilon},
$$
here
\begin{align*}
\theta_2(k)=
\begin{cases}
 \frac{1}{32}, & k=4,
\\ \frac{1}{48}, & k=5,
\\ \frac{1}{64}, & 6\le k\le 8,
\\ \frac{1}{48x}, & k\ge 9,
\end{cases}
\end{align*}
where
\begin{align*}
x=
\begin{cases}
\left\lceil (\frac{k}{4}+1-[\frac{k}{4}])2^{[\frac{k}{4}]-1}\right\rceil, &  9\le k\le 19,\\
\left\lceil (\frac{k}{4}-\frac{1}{2}[\frac{k}{4}])([\frac{k}{4}]+1)\right\rceil, &  k\ge 20.
\end{cases}
\end{align*}
\end{theorem}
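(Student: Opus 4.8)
The plan is to mirror the circle-method argument developed for Theorem~\ref{Thm236k}, adapting every step to the exponent configuration $(2,4,4,k)$ in place of $(2,3,6,k)$. First I would set up the exponential sums. For a prime $p$ and exponent $j$, write $f_j(\alpha)=\sum_{p\le P_j}(\log p)e(p^j\alpha)$ with $P_j\asymp N^{1/j}$, and for the final variable introduce the generating sum $f_k(\alpha)$ over $p\le N^{1/k}$. The number of representations of $n$ in the form (\ref{244k}) is then
\begin{align*}
R(n)=\int_0^1 f_2(\alpha)f_4(\alpha)^2 f_k(\alpha)\,e(-n\alpha)\,d\alpha,
\end{align*}
and the exceptional set $E_2(k,N)$ is controlled by bounding $n\in[N/2,N]$ for which $R(n)$ fails to attain its expected main term. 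I would dissect $[0,1]$ into major arcs $\mathfrak{M}$ and minor arcs $\mathfrak{m}$ with the same Hardy--Littlewood parameters as in the proof of Theorem~\ref{Thm236k}.

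On the major arcs, the standard asymptotic evaluation yields the expected main term $\asymp \mathfrak{S}(n)n^{1/2+1/4+1/4+1/k-1}(\log n)^{-4}$, where $\mathfrak{S}(n)$ is the singular series, which is bounded below under the necessary congruence (local solubility) conditions. This part carries over essentially verbatim from the argument already carried out for the cube--sixth-power case, since only the values of the exponents enter the exponent of $n$ and the local factors. The heart of the matter, and what I expect to be the main obstacle, is the minor-arc treatment, which is where the exponent $k$ and hence the exceptional-set savings $\theta_2(k)$ are determined. Here the plan is to apply a mean-value (Bessel-type) bound to estimate the measure of the set of exceptional $n$: by Bessel's inequality the exceptional set is bounded by the $L^2$-average over the minor arcs of $f_2 f_4^2 f_k$, which after Cauchy--Schwarz reduces to controlling
\begin{align*}
\sup_{\alpha\in\mathfrak{m}}|f_4(\alpha)|\quad\text{together with}\quad \int_0^1 |f_2(\alpha)|^2|f_4(\alpha)|^2\,d\alpha \quad\text{and an auxiliary mean value for } f_k.
\end{align*}

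The quantity that governs $\theta_2(k)$ is the $k$-th power Weyl-type estimate, and the piecewise definition of $x$ reflects which tool gives the best exponent in each range of $k$. For moderate $k$ (the range $9\le k\le 19$) I would use a Weyl-differencing/Hua-type bound whose efficiency decays like $2^{-[k/4]}$, producing the factor $2^{[k/4]-1}$ in $x$; for large $k$ (the range $k\ge 20$) I would instead invoke a Vinogradov-type mean value estimate, whose quadratic-in-$k$ efficiency produces the factor $([k/4]+1)$ in $x$, and I would take whichever is sharper at the crossover. Collecting the minor-arc contribution against the major-arc main term and optimizing the arc parameters then yields $E_2(k,N)\ll N^{1-\theta_2(k)+\varepsilon}$ with $\theta_2(k)=\tfrac{1}{48x}$ for $k\ge 9$ and the stated constants $\tfrac{1}{32},\tfrac{1}{48},\tfrac{1}{64}$ for the small cases $k=4,5$ and $6\le k\le 8$, where the small-$k$ values are obtained directly by the same optimization without recourse to the asymptotic $x$-formula. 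The main difficulty throughout is bookkeeping the interaction between the $f_4^2$ factor (which supplies a strong fourth-power mean value, better than the single cube available in Theorem~\ref{Thm236k}) and the $f_k$ factor, and verifying that the resulting exponent genuinely improves on the trivial and on Schwarz's bounds across every range of $k$.
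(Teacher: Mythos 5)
Your skeleton for $k\ge 9$ --- Bessel's inequality, then H\"older interpolation of the $L^2$ minor-arc integral against mixed mean values, with Hua-type moments (efficiency $2^{-[k/4]}$) in the middle range and Vinogradov--Bourgain moments (efficiency quadratic in $[k/4]$) for large $k$ --- is indeed the mechanism behind the formula for $x$, and your reading of where the two branches of $x$ come from is correct. But two of the stated conclusions are out of reach of the argument as you describe it.

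First, the cases $4\le k\le 8$ do \emph{not} follow ``by the same optimization.'' Run your own scheme at $k=4$: the relevant mixed moment is $\int_0^1|S_2^2S_4^2S_k^{2x}|\,d\alpha\ll N^{2x/k+1/2+\varepsilon}$ (Lemma \ref{lemS24x}), which at $k=4$ holds with $x=1$, and then H\"older together with the pointwise bound $\sup|S_4|\ll N^{1/4-1/96+\varepsilon}$ (Lemma \ref{nS2S3S5}) yields only $N^{1+2/4-1/48+\varepsilon}$, i.e.\ $\theta_2(4)=\tfrac{1}{48}$, strictly weaker than the claimed $\tfrac{1}{32}$; the same computation at $k=5$ gives $\tfrac{1}{96}$, not $\tfrac{1}{48}$. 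To reach the constants $\tfrac{1}{32},\tfrac{1}{48},\tfrac{1}{64}$ the paper invokes Lemma 3.1 of Zhao \cite{Zhao1}, an iterative (Weyl-differencing on the $S_4$ factor) estimate that produces a \emph{self-referential} inequality: the quantity $\int_{\mathfrak{n}}|S_2^2S_4^4S_5^2|\,d\alpha$ is bounded by an expression containing its own fourth root, together with the auxiliary bound $J_0\ll N^{-3/5+\varepsilon}$ from Lemma 2.2 of \cite{Zhao1}, and one then solves that inequality. Nothing in your proposal supplies this extra idea, so for $4\le k\le 8$ your argument proves a genuinely weaker theorem.

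Second, your minor-arc supremum is taken over all of $\mathfrak{m}$, and that step fails for large $k$. With major-arc parameter $Q=N^{2/(5k)}$, the set $\mathfrak{m}$ still contains points $\alpha$ whose rational approximations have denominator $q\asymp Q$, and there $|S_4(\alpha)|$ is genuinely of major-arc size $\asymp N^{1/4}q^{-1/2}=N^{1/4-1/(5k)}$; since $\tfrac{1}{5k}<\tfrac{1}{96}$ as soon as $k\ge 20$, the saving $\sup_{\mathfrak{m}}|S_4|^{2/x}$ can deliver is only $N^{-2/(5kx)}$, which is weaker than the required $N^{-1/(48x)}$ exactly in the range $k\ge 20$ where the Vinogradov-type moments (and the large-$k$ asymptotic $\theta_2(k)\sim\frac{1}{\frac32k^2}$) are supposed to apply. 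The paper's remedy is a three-tier dissection $\mathfrak{M}$, $\mathfrak{N}\backslash\mathfrak{M}$, $\mathfrak{n}$ with $\mathfrak{N}=\mathfrak{N}(N^{1/8})$: the Weyl-type bound of Kumchev \cite{Kumchev} is used only on $\mathfrak{n}$, while on the intermediate region $\mathfrak{N}\backslash\mathfrak{M}$ one exploits smallness of measure rather than pointwise smallness, combining the dyadic pointwise bounds of Lemma \ref{l1501} with the mean value $\int_{\mathfrak{M}(2K)}|S_k^2(\alpha)|\,d\alpha$ of Lemma 5.2 in Hoffman--Yu \cite{HoffmanYu} and summing over $N^{2/(5k)}\ll K\ll N^{1/8}$. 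Relatedly, your ``auxiliary mean value for $f_k$'' must be the \emph{mixed} moment $\int_0^1|S_2^2S_4^2S_k^{2x}|$ (whose proof requires a diagonal/off-diagonal solution count plus the interpolation Lemma with $\delta=\tfrac14$), not a pure moment of $f_k$; with a pure moment the exponent bookkeeping does not close.
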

\noindent\textit{Remark:} For example, we obtain that
$E_2(4, N)\ll N^{1-\frac{1}{32}+\varepsilon}$ and $E_2(6, N)\ll N^{1-\frac{1}{64}+\varepsilon}$. Meanwhile, Br\"udern's method in \cite{Brudern}
indicated that $E_2(4, N)\ll N^{1-\frac{1}{128}+\varepsilon}$ and $E_2(6, N)\ll N^{1-\frac{1}{288}+\varepsilon}$.
In addition, for large value $k$, Theorem \ref{Thm244k} gives that $E_2(k, N)\ll N^{1-\frac{1}{\frac{3}{2}k^2+O(k)}+\varepsilon}$,
whereas Br\"udern's method in \cite{Brudern} showed that $E_2(k, N)\ll N^{1-\frac{1}{8k^2}+\varepsilon}$.

Another related problem is to study for the diophantine equation
\begin{align}\label{235k}
n=p_1^2+p_2^3+p_3^5+p_4^k
\end{align}
where $p_1$, $p_2$, $p_3$, $p_4$ are primes. Let $E_3(k, N)$ be the number of even integers $n\le N$ that can not
be represented in the form (\ref{235k}). In 1953, Prachar \cite{Prachar} proved that
$E_3(4, N)\ll N(\log N)^{-\frac{30}{47}+\varepsilon}$. This has been improved by a number of authors (c. f.\cite{Bauer1,Bauer2,RenTsang1,RenTsang2}).
The latest result is
\begin{align*}
E_3(4, N)\ll N^{1-\frac{1}{16}+\varepsilon}
\end{align*}
given by Zhao \cite{Zhao3}. For general $k\ge 5$, Lu and Shan \cite{LuShan} proved that $E_3(k, N)\ll N(\log N)^{-c}$ for
some $c>0$. Lately, it was improved to $E_3(k, N)\ll N^{1-\frac{1}{3k\times 2^{k-2}}+\varepsilon}$ by Liu \cite{LiuZ}. The
current best result was given by Hoffman and Yu \cite{HoffmanYu} which is
\begin{align}\label{E3HY}
E_3(k, N)\ll N^{1-\frac{47}{420\cdot 2^s}+\varepsilon}
\end{align}
where $s=[\frac{k+1}{2}]$. In this paper, we established the following result which improves (\ref{E3HY}).
\begin{theorem}\label{Thm235k}
Let $E_3(k, N)$ be defined as above. We have
$$
E_3(k, N)\ll N^{1-\theta_3(k)+\varepsilon},
$$
here
\begin{align*}
\theta_3(k)=
\begin{cases}
 \frac{1}{24}, & k=5,
\\ \frac{2}{81}, & k=6,
\\ \frac{1}{36x}, & k\ge 7,
\end{cases}
\end{align*}
where
\begin{align*}
x=
\begin{cases}
\left\lceil (\frac{k}{6}+1-[\frac{k}{6}])2^{[\frac{k}{6}]-1}\right\rceil, &  7\le k\le 23,
\\ \left\lceil \frac{7k}{6}-20 \right\rceil, &  24\le k\le 29,
\\ \left\lceil (\frac{k}{6}-\frac{1}{2}[\frac{k}{6}])([\frac{k}{6}]+1)\right\rceil, &  k\ge 30.
\end{cases}
\end{align*}
\end{theorem}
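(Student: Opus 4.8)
\noindent\textit{Proof proposal.} The plan is to attack \eqref{235k} by the Hardy--Littlewood circle method, reusing the architecture that underlies Theorem~\ref{Thm236k}. For $j\in\{2,3,5,k\}$ set $X_j=(N/4)^{1/j}$ and
\[
f_j(\alpha)=\sum_{X_j<p\le 2X_j}(\log p)\,e(\alpha p^{j}),\qquad e(t)=e^{2\pi i t},
\]
so that the weighted representation count is $R(n)=\int_0^1 f_2 f_3 f_5 f_k\, e(-n\alpha)\,d\alpha$. I would dissect $[0,1)$ into major arcs $\mathfrak{M}$ (the neighbourhoods of $a/q$ with $q\le P=N^{\delta}$ for small fixed $\delta$) and minor arcs $\mathfrak{m}=[0,1)\setminus\mathfrak{M}$. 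On $\mathfrak{M}$ the standard machinery (Siegel--Walfisz for the prime sums, explicit evaluation of the singular integral) gives $R_{\mathfrak M}(n)=\mathfrak{S}(n)\mathfrak{J}(n)+O(\cdots)$ with $\mathfrak{J}(n)\asymp N^{1/30+1/k}$, while a routine local analysis shows $\mathfrak{S}(n)\gg 1$ for every even $n$ meeting the necessary congruence conditions. Hence $R_{\mathfrak M}(n)\gg N^{1/30+1/k}(\log N)^{-4}$ for all admissible $n$.

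Next I would set up the exceptional-set estimate. Let $\mathcal E\subset(N/2,N]$ be the set of admissible even $n$ with $R(n)=0$; for such $n$ one has $|R_{\mathfrak m}(n)|=|R(n)-R_{\mathfrak M}(n)|\gg N^{1/30+1/k}(\log N)^{-4}$. Since $R_{\mathfrak m}(n)$ is the $n$-th Fourier coefficient of $f_2 f_3 f_5 f_k\,\mathbf 1_{\mathfrak m}$, Bessel's inequality yields
\[
\#\mathcal E\cdot N^{1/15+2/k}(\log N)^{-8}\ll\sum_{n\le N}|R_{\mathfrak m}(n)|^2\le\int_{\mathfrak m}|f_2 f_3 f_5 f_k|^2\,d\alpha=:J .
\]
Everything then reduces to the minor-arc mean value $J\ll N^{16/15+2/k-1/(36x)+\varepsilon}$, which upon division by $N^{1/15+2/k}$ delivers $\#\mathcal E\ll N^{1-1/(36x)+\varepsilon}$. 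Note that the full-circle integral $\int_0^1|f_2 f_3 f_5 f_k|^2\,d\alpha$ is already of size $N^{16/15+2/k+\varepsilon}$ (it counts the additive equation $p_1^2+p_2^3+p_3^5+p_4^k=q_1^2+q_2^3+q_3^5+q_4^k$), so the genuine task is to extract the saving $N^{-1/(36x)}$ purely from restricting to $\mathfrak m$.

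The core of the argument is the bound for $J$, and this is where $x$ enters. I would apply H\"older's inequality to separate the $k$-th power sum from the rest, feeding in a diagonal-strength mean value $\int_0^1|f_k(\alpha)|^{2x}\,d\alpha\ll X_k^{2x-k+\varepsilon}$ for the smallest admissible number $x$ of variables, together with the lower mean values for $f_2,f_3,f_5$ and a non-trivial pointwise minor-arc estimate for $f_k$ (and, where advantageous, for $f_5$). The mean-value factor is what produces the saving of order $N^{-1/x}$ against the trivial size $X_k^2$, and balancing this against the cost of the complementary H\"older factor yields the net saving $N^{-1/(36x)}$; here a crude supremum bound on the complementary factor would exactly cancel the gain, so the balance must be arranged carefully. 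The admissible $x$ is dictated by how cheaply one can reach the diagonal in $\int_0^1|f_k|^{2x}$: for moderate $k$ ($7\le k\le 29$) an iterated Weyl-differencing estimate is the more efficient input, accounting for the exponential branches $\lceil(\tfrac k6+1-[\tfrac k6])2^{[\frac k6]-1}\rceil$ and $\lceil\tfrac{7k}6-20\rceil$, whereas for $k\ge 30$ Vinogradov's mean-value theorem in its now-optimal form supersedes it and produces the quadratic branch $\lceil(\tfrac k6-\tfrac12[\tfrac k6])([\tfrac k6]+1)\rceil$; the ceilings and the crossover near $k=30$ are precisely what one obtains by minimising $x$ over the two available estimates. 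This treatment of $f_k$ is identical to the one behind Theorem~\ref{Thm236k}, which is why the same formula for $x$ recurs.

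The very small exponents $k=5,6$ I would treat separately, where sharper two- and three-variable mean values together with the best pointwise estimates for $f_5$ and $f_6$ give the improved constants $\theta_3(5)=\tfrac1{24}$ and $\theta_3(6)=\tfrac2{81}$. I expect the main obstacle to be exactly the minor-arc analysis of $f_k$: one must establish the diagonal mean value with as few variables as possible and couple it with a pointwise Weyl/Vinogradov bound so that the H\"older balance is not degraded, and one must check that the auxiliary mean values such as $\int_0^1|f_2 f_3 f_5|^2\,d\alpha\ll N^{16/15+\varepsilon}$ (and their analogues) do not dominate $J$. By contrast the major-arc work and the positivity $\mathfrak S(n)\gg 1$ are comparatively routine.
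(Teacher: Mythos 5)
Your global architecture (circle method, Bessel's inequality, H\"older to isolate a high moment of the $k$-th power sum, Hua-type input for moderate $k$ versus Vinogradov--Bourgain for large $k$, separate treatment of $k=5,6$) is indeed the paper's skeleton, but the quantitative engine you propose fails exactly at the point you call the core. The mean value you want to feed in, $\int_0^1|f_k(\alpha)|^{2x}\,d\alpha\ll X_k^{2x-k+\varepsilon}$ with the $x$ of the theorem, is false in general and unreachable otherwise: for $k=12$ the theorem has $x=2$, so your bound would read $\int_0^1|f_{12}|^{4}\,d\alpha\ll X_{12}^{-8+\varepsilon}$; for $k=30$ it has $x=15$, while the diagonal solutions alone give $\int_0^1|f_{30}|^{30}\,d\alpha\gg X_{30}^{15}$, not $\ll X_{30}^{\varepsilon}$. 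A genuine diagonal bound requires roughly $x\ge k(k+1)/2$ (Bourgain's form of Vinogradov's mean value theorem) or $2x=2^{k}$ (Hua), whereas the theorem's $x$ is only of size $k^2/72$; so reading $x$ as ``how cheaply one can reach the diagonal'' cannot produce the stated exponent. What the paper actually uses is a \emph{partial} saving: Lemma~\ref{lemmain} with $\delta=\tfrac16$ gives $\int_0^1|S_k^{2x}(\alpha)|\,d\alpha\ll N^{2x/k-1/6+\varepsilon}$, obtained by H\"older interpolation between consecutive Hua moments, between Hua's $16$-th and Bourgain's $30$-th moments, or between consecutive Bourgain moments $j(j+1)$ and $(j+1)(j+2)$; this is where the three branches and the cutoffs at $k=24$ and $k=30$ really come from. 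The saving $\tfrac16$ (rather than $1$) is precisely what is needed to make the hybrid mean value $\int_0^1|S_2^2(\alpha)S_3^2(\alpha)S_k^{2x}(\alpha)|\,d\alpha\ll N^{2x/k+2/3+\varepsilon}$ (Lemma~\ref{lemS23k}) lossless, since its degenerate terms cost $N^{1/2+1/3}\int_0^1|S_k^{2x}|\,d\alpha$; such hybrid lemmas, not a pure moment of $f_k$, are what enter the final H\"older step, and they are exactly the mechanism that prevents the ``complementary factor'' from cancelling the gain.

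Your accounting of where the final saving comes from is also wrong, and this is not cosmetic. In the paper every mean value in the H\"older step is tight; the entire saving $N^{-1/(36x)}$ comes from a single pointwise factor, $\sup_{\alpha\in\mathfrak n}|S_3(\alpha)|^{1/x}\ll N^{(1/3-1/36+\varepsilon)/x}$ (Lemma~\ref{nS2S3S5}, a Kumchev/Zhao-type Weyl bound for the \emph{cube} sum, not for $f_k$ or $f_5$): the $36$ in $\theta_3(k)=1/(36x)$ is literally the $36$ of that estimate. Such a bound is only available on the pruned minor arc $\mathfrak n=\mathfrak m\setminus\mathfrak N$, where $\mathfrak N$ consists of arcs with denominators up to $N^{1/8}$, and the intermediate region $\mathfrak N\setminus\mathfrak M$ must then be handled separately by a pruning argument (Theorem 2 of \cite{Kumchev} together with Lemma 5.2 of \cite{HoffmanYu}). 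Your single dichotomy, with major arcs of height $N^{\delta}$ for small fixed $\delta$, admits no pointwise bound anywhere near $N^{1/3-1/36}$ on all of $\mathfrak m$, so even after repairing the mean value the proposal still lacks the two-tier dissection $\mathfrak m=(\mathfrak N\setminus\mathfrak M)\cup\mathfrak n$ and the pruning estimate. In short: the skeleton is right, but the two load-bearing inputs --- the $\delta=\tfrac16$ partial-saving moment combined with hybrid mean values, and the strong pointwise bound on $S_3$ over a pruned minor arc --- are missing, and the input you substitute for them is unavailable or outright false.
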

\noindent\textit{Remark:} Our results indeed improve the result of Hoffman and Yu \cite{HoffmanYu}. For example, we
obtain that $E_3(5, N)\ll N^{1-\frac{1}{24}+\varepsilon}$ and $E_3(7, N)\ll N^{1-\frac{1}{72}+\varepsilon}$. Meanwhile,
Hoffman and Yu's results indicated that $E_3(5, N)\ll N^{1-\frac{47}{3360}+\varepsilon}$
and $E_3(7, N)\ll N^{1-\frac{47}{6720}+\varepsilon}$. In addition, for large value $k$, $E_3(k, N)\ll N^{1-\frac{1}{\theta(k)}+\varepsilon}$,
Hoffman and Yu \cite{HoffmanYu} showed that $\theta(k)$ grows exponentially, whereas, Theorem \ref{Thm235k} implicates that
$\theta(k)=\frac{k^2}{2}+O(k)$ with polynomial growth.

Finally, we consider the problem of representing a large odd integer $n$ in the form
\begin{align}\label{3333k}
n=p_1^3+p_2^3+p_3^3+p_4^3+p_5^k,
\end{align}
where $p_1$, $p_2$, $p_3$, $p_4$ and $p_5$ are primes. Let $E_4(k, N)$ denote the number of positive odd integers $n$ up to $N$
which can not be written in the form (\ref{3333k}). In the following result, we will give a up bound for $E_4(k, N)$ for $k\ge 4$.
\begin{theorem}\label{Thm3333k}
Let $E_4(k, N)$ be defined as above. We have
$$
E_4(k, N)\ll N^{1-\theta_4(k)+\varepsilon},
$$
here
\begin{align*}
\theta_4(k)=
\begin{cases}
 \frac{1}{24}, & k=4,
\\ \frac{1}{54}, & k=5,
\\ \frac{1}{9x}, & k\ge 6,
\end{cases}
\end{align*}
where
\begin{align*}
x=\begin{cases}
\left\lceil \frac{14k}{3}-20 \right\rceil, &  k=6,7,
\\ \left\lceil (\frac{2k}{3}-\frac{1}{2}[\frac{2k}{3}])([\frac{2k}{3}]+1)\right\rceil, &  k\ge 8.
\end{cases}
\end{align*}
\end{theorem}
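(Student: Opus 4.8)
The plan is to attack \eqref{3333k} by the Hardy--Littlewood circle method, detecting the exceptional set through a mean-square (Bessel) argument, exactly along the lines indicated for Theorem \ref{Thm236k}. First I would set up the weighted generating functions $S_3(\alpha)=\sum_{X<p\le 2X}(\log p)e(\alpha p^3)$ and $S_k(\alpha)=\sum_{Y<p\le 2Y}(\log p)e(\alpha p^k)$, with $X\asymp N^{1/3}$ and $Y\asymp N^{1/k}$, so that the weighted representation number is $R(n)=\int_0^1 S_3(\alpha)^4 S_k(\alpha)e(-n\alpha)\,d\alpha$. Dissecting $[0,1]$ into major arcs $\mathfrak M$ (around rationals $a/q$ with small $q$) and minor arcs $\mathfrak m$, the major-arc analysis is routine: a standard singular-series/singular-integral computation gives $\int_{\mathfrak M}S_3^4 S_k\, e(-n\alpha)\,d\alpha=\mathfrak S(n)\mathfrak J(n)+o\!\left(N^{1/3+1/k}\right)$, and one checks that for odd $n$ meeting the necessary congruence conditions the singular series satisfies $\mathfrak S(n)\gg 1$, whence the main term is $\gg N^{1/3+1/k}$.

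Next I would convert this into a bound on $E_4(k,N)$. If $n$ is admissible but has no representation, then $R(n)=0$, so the minor-arc integral must absorb the entire main term: $\bigl|\int_{\mathfrak m}S_3^4 S_k\, e(-n\alpha)\,d\alpha\bigr|\gg N^{1/3+1/k}$. Summing over the exceptional set and invoking Bessel's inequality yields
\begin{equation*}
E_4(k,N)\,N^{2/3+2/k}\ll\int_{\mathfrak m}|S_3(\alpha)|^8\,|S_k(\alpha)|^2\,d\alpha,
\end{equation*}
so the whole problem reduces to saving a factor $N^{\theta_4}$ over the trivial size $N^{5/3+2/k}$ of this minor-arc integral.

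The heart of the proof, and the step I expect to be hardest, is this minor-arc estimate, and it is precisely here that the parameter $x$ and the gain over Br\"udern and over Hoffman--Yu enter. I would feed in three ingredients: Hua's inequality for the four prime cubes, $\int_0^1|S_3|^8\,d\alpha\ll X^{5+\varepsilon}$; a Weyl-type pointwise bound $\sup_{\alpha\in\mathfrak m}|S_k(\alpha)|\ll Y^{1-\rho_k+\varepsilon}$ obtained by iterated Weyl differencing (Hua's device); and a Vinogradov/Hua-type mean-value estimate $\int_0^1|S_k(\alpha)|^{2x}\,d\alpha\ll Y^{2x-k+\varepsilon}$ at the critical exponent $x$. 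The delicate point is the \emph{combination}: a crude extraction of $\sup_{\mathfrak m}|S_k|^2$ loses a spurious factor of $k$ and is far too weak, since only a saving of order $\rho_k\asymp 1/k^2$ is available pointwise. Instead I would run a large-values (level-set) argument, splitting $\mathfrak m$ according to the size of $|S_k|$, bounding the measure of each level set by the $2x$-th moment and controlling the cube contribution on each level by Hua's inequality, and then optimising over the levels. This is what produces a net saving of order $\rho_k\asymp 1/x$ rather than $\rho_k/k$; balancing the differencing/mean-value parameters then gives precisely $\theta_4=\tfrac{1}{9x}$ with $x$ as in the statement, the quantity $\tfrac{2k}{3}$ arising from measuring the degree-$k$ sum against the cubic (degree $3$) differencing, and the two ranges of $k$ reflecting the optimal number of differencing steps.

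Finally, for the small exponents $k=4,5$ I would replace the general mean-value input by sharper bespoke estimates (for which the available moments of $S_k$ are more favourable), which accounts for the separate values $\theta_4(4)=\tfrac1{24}$ and $\theta_4(5)=\tfrac1{54}$ recorded in the statement.
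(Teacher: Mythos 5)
Your skeleton (circle method, Bessel's inequality, reduction to the bound $\int_{\mathfrak m}|S_3|^8|S_k|^2\,d\alpha\ll N^{5/3+2/k-\theta_4(k)+\varepsilon}$) matches the paper, but the core of your minor-arc argument has a genuine gap, and it occurs exactly at the step you identify as the heart of the proof. First, your assumed mean value $\int_0^1|S_k|^{2x}\,d\alpha\ll Y^{2x-k+\varepsilon}$ is not available at the theorem's $x$: for $k\ge 8$ one has $x\approx\frac{2k^2}{9}$, whereas a full saving of $Y^{k}$ requires roughly $2x\ge k(k+1)$, i.e. $x\approx k^2/2$ (Lemma \ref{LemHuaBou} with $j=k$). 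At the theorem's $x$ the available saving is only $Y^{2k/3}=N^{2/3}$ (Lemma \ref{lemmain} with $\delta=\tfrac23$), and that choice of $\delta$ is precisely the design of the paper's argument. Second, even granting correct inputs, your level-set optimisation produces no saving at all at the critical level, namely where $|S_3(\alpha)|$ sits at its minor-arc sup $N^{1/3-1/36}$ (Lemma \ref{nS2S3S5}) and $|S_k|$ is of essentially trivial size. There, bounding the measure by Hua's inequality for $S_3$ gives measure $\ll N^{-7/9}$ and contribution $\ll N^{8(1/3-1/36)+2/k-7/9}=N^{5/3+2/k}$, i.e. zero saving; bounding the measure instead by the $2x$-th moment of $S_k$ saves only $N^{2/3}$, and since $8\cdot\frac1{36}+\frac23=\frac89<1$ this is strictly worse than Hua. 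No convex combination does better: the total cancellation extractable from the sup of $S_3$ (eight copies) together with moments of $S_k$ is $8/9<1$, while Hua already saves $1$, so separate sups and moments can never beat the trivial bound at this level. The only gain left is $\sup_{\mathfrak n}|S_k|^2$, which is your "crude" saving of order $\rho_k/k$; the level-set device does not remove that spurious factor of $k$.

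What is missing is the paper's genuinely bilinear ingredient, Lemma \ref{8233}:
\begin{equation*}
\int_0^1|S_3^4(\alpha)S_k^{2x}(\alpha)|\,d\alpha\ll N^{\frac{2x}{k}+\frac13+\varepsilon},
\end{equation*}
a saving of a full factor $N$ with only $x\approx 2k^2/9$ variables of degree $k$. It is proved by Weyl-differencing the cubic sum (Vaughan's Lemma 2.3), which reduces the integral to counting solutions of $3h_1h_2(2t+h_1+h_2)=p_1^k+\cdots+p_x^k-q_1^k-\cdots-q_x^k$; nonzero values of the left-hand side are handled by the divisor bound, and the diagonal $\Delta=0$ by Lemma \ref{lemmain} with $\delta=\tfrac23$ --- the cubic differencing supplies the extra cancellation that separate moments cannot. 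With this lemma a single H\"older application,
\begin{equation*}
\int_{\mathfrak n}|S_3^8S_k^2|\,d\alpha\ll\sup_{\alpha\in\mathfrak n}|S_3(\alpha)|^{\frac4x}\Big(\int_0^1|S_3^8(\alpha)|\,d\alpha\Big)^{1-\frac1x}\Big(\int_0^1|S_3^4(\alpha)S_k^{2x}(\alpha)|\,d\alpha\Big)^{\frac1x},
\end{equation*}
gives exponent $\frac53+\frac2k+\frac1x\left(\frac43-\frac19-\frac53+\frac13\right)=\frac53+\frac2k-\frac1{9x}$, which is exactly $\theta_4(k)=\frac1{9x}$. Two further points your sketch omits: the paper needs a separate pruning estimate on the intermediate arcs $\mathfrak N\setminus\mathfrak M$ (via Lemma \ref{l1501} and Lemma 5.2 of Hoffman--Yu), since the pointwise bounds only hold on $\mathfrak n$; and the case $k=4$ is handled not by a bespoke moment but by Lemma 3.1 of Zhao \cite{Zhao1} combined again with Lemma \ref{8233}. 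These are secondary, however; the essential missing idea is the bilinear mixed moment.
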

As usual, we abbreviate $e^{2\pi i\alpha}$ to $e(\alpha)$. The letter $p$, with or without indices, is
prime number. The letter $\varepsilon$ denotes a sufficiently small positive real number, and the value of $\varepsilon$ may
change from statement to statement. Let $N$ be a real number sufficiently large in terms of $\varepsilon$ and $k$. We
use $\ll$ and $\gg$ to denote Vinogradov's well-know notation, while implied constant may depend on $\varepsilon$ and $k$.

\section{Preliminaries and lemmas}
We will prove Theorems 1.1-1.4 by using circle method. Now the treatment for major arcs of Hardy-Littlewood method are standard nowadays,
for example  Liu and Zhan \cite{LiuZhan}. We need the following lemmas to control the minor arcs of circle method.
\begin{lemma}\label{LemHuaBou}
Let
$$S_k(\alpha)=\sum_{N/4<p^k\le N}(\log p)e(\alpha p^k).$$
Then for $1\le j\le k$, we have
$$\int_0^1\big|S_k^{2^j}(\alpha)\big|d\alpha\ll N^{\frac{1}{k}(2^j-j)+\varepsilon}$$
and
$$\int_0^1\big|S_k^{j(j+1)}(\alpha)\big|d\alpha\ll N^{\frac{j^2}{k}+\varepsilon}.$$
\end{lemma}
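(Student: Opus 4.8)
The plan is to read both inequalities as weighted solution counts for a single diagonal equation in $k$-th powers of primes, and to reduce each to the corresponding mean value of the ordinary Weyl sum
$$f(\alpha)=\sum_{N/4<x^k\le N}e(\alpha x^k),$$
where $x$ now runs over all integers in the dyadic range; write $P=N^{1/k}$. Both exponents are even, say $m=2t$ with $t=2^{j-1}$ or $t=\frac{1}{2}j(j+1)$. Expanding by orthogonality,
$$\int_0^1|S_k(\alpha)|^{2t}\,d\alpha=\sum_{p_1^k+\cdots+p_t^k=q_1^k+\cdots+q_t^k}(\log p_1)\cdots(\log q_t),$$
the primes ranging over $N/4<p^k\le N$. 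Each logarithm is $\ll\log N$, so the total weight is $\ll N^{\varepsilon}$; and since every prime solution is an integer solution, the number of solutions is at most $\int_0^1|f(\alpha)|^{2t}\,d\alpha$. Hence $\int_0^1|S_k|^{2t}\,d\alpha\ll N^{\varepsilon}\int_0^1|f|^{2t}\,d\alpha$, and it suffices to prove $\int_0^1|f|^{2^j}\ll P^{2^j-j+\varepsilon}$ and $\int_0^1|f|^{j(j+1)}\ll P^{j^2+\varepsilon}$, because $P^{2^j-j}=N^{(2^j-j)/k}$ and $P^{j^2}=N^{j^2/k}$.

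The first of these is Hua's inequality. I would prove it by induction on $j$: the case $j=1$ is the diagonal count $\int_0^1|f|^2\ll P$, and the inductive step applies one Weyl differencing, writing $|f(\alpha)|^2\le P+\sum_{0<|h|<P}|g_h(\alpha)|$ with $g_h(\alpha)=\sum_x e(\alpha\phi_h(x))$ and $\phi_h(x)=(x+h)^k-x^k$ of degree $k-1$, then raising to the power $2^{j-1}$, applying H\"older over the $\ll P$ shifts, and controlling the mean value of the differenced sum. This is entirely classical, and I would simply quote it.

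The real work is the second bound. At $j=k$ the exponent $j^2=2t-k$ (with $2t=k(k+1)$) is the optimal circle-method exponent, while for $1\le j<k$ it is a sub-critical refinement, and here, for $j\ge 3$, no elementary argument will do. A naive ``fix all but two variables and apply the divisor bound for representations as a sum of two $k$-th powers'' argument settles $j=1,2$ but already fails at $j=3$, yielding $\int_0^1|f|^{12}\ll P^{10+\varepsilon}$ in place of the required $P^{9+\varepsilon}$. The hard part is therefore to obtain the sharp exponent $j^2$, which needs the full strength of Vinogradov's mean value theorem in the form established by Bourgain--Demeter--Guth and by Wooley; the estimate $\int_0^1|f|^{j(j+1)}\,d\alpha\ll P^{j^2+\varepsilon}$ is by now a standard consequence thereof, recorded in the literature on the Waring--Goldbach problem, and I would cite it in that form.

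It is worth stressing why the sharp input cannot be bypassed: the desired exponent lies strictly below what interpolation between the extreme moments would give. For $k=3$ and $j=2$, H\"older interpolation between $\int_0^1|f|^2\ll P$ and $\int_0^1|f|^{12}\ll P^{9+\varepsilon}$ yields only $P^{4.2+\varepsilon}$ for $\int_0^1|f|^6$, whereas $P^{4+\varepsilon}$ is required; so Vinogradov's theorem must be applied at exactly the critical number of variables for each $j$. Granting this, the prime-to-integer passage and Hua's inequality are routine, and the lemma follows on collecting the exponents.
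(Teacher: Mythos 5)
Your proposal is correct and takes essentially the same route as the paper, whose ``proof'' is nothing more than the sentence attributing the first estimate to Hua's classical inequality and the second to Bourgain's recent work on the Vinogradov mean value theorem --- precisely the two sources you invoke. Your explicit reduction from prime to integer variables (absorbing the logarithmic weights into $N^{\varepsilon}$ and majorizing prime solutions by integer solutions of the same diagonal equation) is the standard step needed to apply those integer-variable results and is sound; the only quibble is that your necessity claims are a little overstated, since for $j\le 4$ the second bound already follows elementarily from Hua's bound together with the trivial estimate $\sup|f|\le P$ (e.g.\ $\int_0^1|f|^{12}\le P^{4}\int_0^1|f|^{8}\ll P^{9+\varepsilon}$), so the full Vinogradov mean value theorem is genuinely needed only for $j\ge 5$ --- but this does not affect the validity of your argument.
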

In fact, Lemma \ref{LemHuaBou} is the classical result of Hua \cite{Hua} and the recent work of Bourgain \cite{Bourgain}. The
next lemma is a generalization of Lemma {\ref{LemHuaBou}}.

\begin{lemma}\label{lemmain}
Let $S_k(\alpha)$ be defined as Lemma \ref{LemHuaBou}. For $0<\delta\le 1$,
\begin{align*}
\int_0^1\big|S_k^{2x}(\alpha)\big|d\alpha\ll N^{\frac{2x}{k}-\delta+\varepsilon},
\end{align*}
where
\begin{align}\label{mainx}
x=
\begin{cases}
\left\lceil (k\delta+1-[k\delta])2^{[k\delta]-1}\right\rceil, &  [k\delta]\le 3,
\\ \left\lceil 7k\delta-20 \right\rceil, & [k\delta]=4,
\\ \left\lceil (k\delta-\frac{1}{2}[k\delta])([k\delta]+1)\right\rceil, &  [k\delta]\ge 5.
\end{cases}
\end{align}
\end{lemma}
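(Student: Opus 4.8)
The plan is to derive the mean value estimate $\int_0^1|S_k^{2x}(\alpha)|\,d\alpha \ll N^{2x/k-\delta+\varepsilon}$ by interpolating between the two families of bounds supplied by Lemma \ref{LemHuaBou}. The key observation is that Lemma \ref{LemHuaBou} gives us, for each $1\le j\le k$, a mean value of $|S_k|$ to the power $2^j$ with exponent saving, and a mean value to the power $j(j+1)$ with its own saving. To prove the claimed bound it suffices to find an even integer $2x$ and a representation of $|S_k^{2x}|$ as a product (or application of H\"older's inequality) of these available moments so that the total exponent is at most $\tfrac{2x}{k}-\delta+\varepsilon$. Concretely, writing $m=[k\delta]$, I would exhibit for each of the three regimes in \eqref{mainx} a suitable combination of the Hua-type bound $\int_0^1|S_k^{2^j}|\,d\alpha\ll N^{(2^j-j)/k+\varepsilon}$ (for small $m$, where the doubling exponent $2^{m-1}$ appears) and the Bourgain-type bound $\int_0^1|S_k^{j(j+1)}|\,d\alpha\ll N^{j^2/k+\varepsilon}$ (for larger $m$, where the quadratic exponent $([k\delta]+1)$ appears), matching precisely the piecewise shape of $x$.

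The first step is to record the trivial bound $\|S_k\|_\infty \ll N^{1/k}\log N$, so that any surplus factor $|S_k|^{2t}$ beyond an integer moment contributes only $N^{2t/k+\varepsilon}$ and can be split off by H\"older. The second step, for the range $m=[k\delta]\le 3$, is to interpolate between consecutive Hua moments $\int|S_k^{2^{m-1}}|$ and $\int|S_k^{2^{m}}|$: taking a convex combination with weight determined by the fractional part $k\delta-[k\delta]$ produces exactly the exponent $2x/k$ with $x=\lceil(k\delta+1-m)2^{m-1}\rceil$, and one checks the saving is at least $\delta$. For $m=4$ the transitional value $x=\lceil 7k\delta-20\rceil$ arises by the same interpolation but using the Bourgain moment at $j=4$ (giving exponent $16/k$ over the $20$th power) blended appropriately. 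For $m\ge 5$ the third step uses the Bourgain bound at level $j=m$, namely $\int|S_k^{m(m+1)}|\ll N^{m^2/k+\varepsilon}$, combined with the infinity-norm to reach the power $2x=2\lceil(k\delta-\tfrac{1}{2}m)(m+1)\rceil$, again arranging the saving to be $\delta$.

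The main verification in each case is the inequality $\tfrac{2x}{k}\le(\text{moment exponent})-\delta$, which reduces to an elementary numerical estimate involving $\lceil\cdot\rceil$, $[\cdot]$ and the defining piecewise formula for $x$; the ceilings are what absorb the rounding so that $2x$ is a genuine even integer while the saving never drops below $\delta$. I expect the main obstacle to be bookkeeping at the boundaries $m=3\to 4$ and $m=4\to 5$, where the optimal choice switches from the Hua moment (with its $2^{m-1}$ growth) to the Bourgain moment (with its $(m+1)$ growth): one must confirm that the interpolation weights stay in $[0,1]$ and that the transitional formula $7k\delta-20$ interpolates the two regimes without loss. Once the three cases are checked, the lemma follows immediately by combining the chosen moment bound with H\"older's inequality and the trivial $L^\infty$ estimate.
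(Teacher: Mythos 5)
Your general strategy (H\"older interpolation among the moments supplied by Lemma \ref{LemHuaBou}) is the paper's strategy, and your treatment of the range $[k\delta]\le 3$ --- a convex combination of two \emph{consecutive} Hua moments --- is essentially what the paper does. But there is a genuine gap in the other two cases, and it stems from the role you assign to the trivial bound $\|S_k\|_\infty\ll N^{1/k+\varepsilon}$. For $[k\delta]=m\ge 5$ you propose to take the single Bourgain moment $\int_0^1|S_k^{m(m+1)}(\alpha)|\,d\alpha\ll N^{m^2/k+\varepsilon}$ and pad it up to the power $2x$ with the $L^\infty$ estimate. This gives
\begin{align*}
\int_0^1|S_k^{2x}(\alpha)|\,d\alpha \ll N^{\frac{2x-m(m+1)}{k}+\varepsilon}\cdot N^{\frac{m^2}{k}+\varepsilon}
= N^{\frac{2x}{k}-\frac{m}{k}+\varepsilon},
\end{align*}
i.e.\ a saving of exactly $[k\delta]/k$. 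Since $[k\delta]/k<\delta$ whenever $k\delta\notin\mathbb{Z}$, this falls short of the claimed saving $\delta$ by the fixed positive amount $(k\delta-[k\delta])/k$, which cannot be absorbed into $\varepsilon$. The point is that the sup-norm carries no saving at all, so any power of $S_k$ routed through it freezes the saving at the value attached to the lower moment. The paper avoids this by interpolating between \emph{two} mean values, each with its own saving: for $m\ge 5$ it uses H\"older between $\int_0^1|S_k^{m(m+1)}|\,d\alpha\ll N^{m^2/k+\varepsilon}$ and $\int_0^1|S_k^{(m+1)(m+2)}|\,d\alpha\ll N^{(m+1)^2/k+\varepsilon}$, so the saving interpolates linearly from $m/k$ up to $(m+1)/k$ and reaches $\delta$ precisely when $x\ge(k\delta-\frac{m}{2})(m+1)$ --- which is exactly the formula in \eqref{mainx}, and is strictly smaller than the power your padding argument would require.

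The transitional case $[k\delta]=4$ in your sketch is also off: you say the formula $\lceil 7k\delta-20\rceil$ comes from blending with ``the Bourgain moment at $j=4$ (the $20$th power).'' It does not. Blending Hua's $16$th-power moment with Bourgain's $20$th-power moment gives a constant saving $4/k$, insufficient for $k\delta>4$; blending Bourgain's $20$th and $30$th powers gives the condition $x\ge 5k\delta-10$, a different (weaker) formula. The pair the paper actually uses is Hua's $\int_0^1|S_k^{16}|\,d\alpha\ll N^{12/k+\varepsilon}$ together with Bourgain's $j=5$ moment $\int_0^1|S_k^{30}|\,d\alpha\ll N^{25/k+\varepsilon}$, with weights $\frac{15-x}{7}$ and $\frac{x-8}{7}$; this yields the saving $\frac{x+20}{7k}$ and hence the threshold $x\ge 7k\delta-20$. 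So the architecture you need throughout is two-sided interpolation between honest mean values (never the $L^\infty$ bound), with the specific pair chosen per regime.
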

\begin{proof}
For $\delta=1$, this is Lemma \ref{LemHuaBou}.
Next, we consider the case $0<\delta<1$: \\
for $[k\delta]\le 3$, clearly by (\ref{mainx}) we have
$$2^{[k\delta]}\le 2x\le 2^{[k\delta]+1}.$$
Applying H\"older's inequality and Hua's lemma, one has
\begin{align*}
\int_0^1\big|S_k^{2x}(\alpha)\big|d\alpha\ll & \Big(\int_0^1\big|S_k^{2^{[k\delta]}}(\alpha)\big|d\alpha\Big)^a
\Big(\int_0^1\big|S_k^{2^{[k\delta]+1}}(\alpha)\big|d\alpha\Big)^b
\notag
\\
\ll & N^{\frac{2x}{k}-c+\varepsilon},
\end{align*}
where
$$
a=2-\frac{x}{2^{[k\delta]-1}}, \quad b=\frac{x}{2^{[k\delta]-1}}-1, \quad
c=\frac{[k\delta]+\frac{2x}{2^{[k\delta]}}-1}{k}.$$
Recall that $x\ge (k\delta+1-[k\delta])2^{[k\delta]-1}$, so we have $c\ge \delta$. Thus this lemma holds for
$[k\delta]\le 3$.

For $[k\delta]=4$, obviously by (\ref{mainx}) we have
$$16 <2x\le 30.$$
Applying H\"older's inequality and Lemma \ref{LemHuaBou}, one has
\begin{align*}
\int_0^1\big|S_k^{2x}(\alpha)\big|d\alpha\ll & \Big(\int_0^1\big|S_k^{16}(\alpha)\big|d\alpha\Big)^{\frac{15}{7}-\frac{x}{7}}
\Big(\int_0^1\big|S_k^{30}(\alpha)\big|d\alpha\Big)^{\frac{x}{7}-\frac{8}{7}}
\notag
\\ \ll & N^{\frac{2x}{k}-\frac{x+20}{7k}+\varepsilon}.
\end{align*}
This combining with $x\ge 7k\delta-20$ gives $\frac{x+20}{7k}\ge \delta.$
Thus this lemma holds for
$[k\delta]=4$.

For $[k\delta]\ge 5$, by (\ref{mainx}) we have
$$[k\delta]([k\delta]+1)\le 2x\le ([k\delta]+1)([k\delta]+2).$$
Applying H\"older's inequality and Lemma \ref{LemHuaBou}, one has
\begin{align*}
\int_0^1\big|S_k^{2x}(\alpha)\big|d\alpha\ll & \Big(\int_0^1\big|S_k^{[k\delta]([k\delta]+1)}(\alpha)\big|d\alpha\Big)^{a}
\Big(\int_0^1\big|S_k^{([k\delta]+1)([k\delta]+2)}(\alpha)\big|d\alpha\Big)^{b}
\notag
\\ \ll & N^{\frac{2x}{k}-c+\varepsilon},
\end{align*}
where
$$
a=1-\frac{x}{[k\delta]+1}+\frac{[k\delta]}{2}, \quad
b=\frac{x}{[k\delta]+1}-\frac{[k\delta]}{2},\quad
c=\frac{\frac{x}{[k\delta]+1}+\frac{[k\delta]}{2}}{k}.
$$
Then we have $c\ge \delta$ because of $x\ge (k\delta-\frac{[k\delta]}{2})([k\delta]+1)$.
This lemma holds for
$[k\delta]\ge 5$.
Hence, this lemma holds for $0<\delta\le 1$.
\end{proof}

\begin{lemma}\label{lemS2x}
For $k\ge 3$, we have
\begin{align*}
\int_0^1|S_2^2(\alpha)S_k^{2x}(\alpha)|d\alpha\ll N^{\frac{2x}{k}+\varepsilon},
\end{align*}
where
\begin{align*}
x=
\begin{cases}
\left\lceil (\frac{k}{2}+1-[\frac{k}{2}])2^{[\frac{k}{2}]-1}\right\rceil, &  3\le k\le 9,
\\ \left\lceil (\frac{k}{2}-\frac{1}{2}[\frac{k}{2}])([\frac{k}{2}]+1)\right\rceil, &  k\ge 10.
\end{cases}
\end{align*}
\end{lemma}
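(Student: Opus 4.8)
The plan is to read the integral as a weighted count of solutions of a Diophantine equation in which the two prime squares are decoupled from the $2x$ prime $k$-th powers, and then to split the count into its diagonal part (treated by Lemma \ref{lemmain} with $\delta=\tfrac12$) and its off-diagonal part (treated by an elementary divisor bound). The point is that the factor $|S_2|^2$ should cost only $N^{1/2}$, which is exactly the saving that Lemma \ref{lemmain} supplies.

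First I would expand the integral by orthogonality. Writing $S_2(\alpha)=\sum_{N/4<p^2\le N}(\log p)e(\alpha p^2)$ and $S_k$ as in Lemma \ref{LemHuaBou}, one has
\begin{align*}
\int_0^1 \big|S_2(\alpha)\big|^2\big|S_k(\alpha)\big|^{2x}\,d\alpha=\sum_{h} r_2(h)\,r_k(h),
\end{align*}
where $r_2(h)=\sum_{p_1^2-p_2^2=h}(\log p_1)(\log p_2)$ counts differences of two prime squares, and $r_k(h)$ is the analogous weighted number of representations $h=\sum_{i=1}^{x}(v_i^k-u_i^k)$ with all primes subject to $N/4<\cdot^{\,k}\le N$. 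Since every contributing $h$ satisfies $|h|\ll N$, the sum breaks into the diagonal term $h=0$ and the range $h\ne 0$.

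For the off-diagonal range I would use the factorization $p_1^2-p_2^2=(p_1-p_2)(p_1+p_2)$: for each fixed $h\ne 0$ a factorization of $h$ determines the pair $(p_1,p_2)$, so $r_2(h)\ll d(|h|)(\log N)^2\ll N^\varepsilon$ uniformly. Extracting this bound gives
\begin{align*}
\sum_{h\ne 0} r_2(h)\,r_k(h)\ll N^\varepsilon\sum_{h} r_k(h)=N^\varepsilon\,S_k(0)^{2x}\ll N^{\frac{2x}{k}+\varepsilon},
\end{align*}
because $\sum_h r_k(h)$ simply counts all $2x$-tuples of admissible primes, while $S_k(0)=\sum_{N/4<p^k\le N}\log p\ll N^{1/k}$ by Chebyshev's estimate.

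The main step, and the only place where real input is needed, is the diagonal $h=0$, which forces $p_1=p_2$ and leaves $r_2(0)=\sum_{N/4<p^2\le N}(\log p)^2\ll N^{1/2+\varepsilon}$ together with $r_k(0)=\int_0^1|S_k(\alpha)|^{2x}\,d\alpha$. Here I would apply Lemma \ref{lemmain} with $\delta=\tfrac12$: the value of $x$ displayed in the present lemma is precisely the one produced by \eqref{mainx} at $\delta=\tfrac12$ (for $3\le k\le 7$ one has $[k\delta]\le 3$ and for $k\ge 10$ one has $[k\delta]\ge 5$, matching the two cases verbatim, while in the overlap $k\in\{8,9\}$, where $[k\delta]=4$, the two formulae agree after taking ceilings). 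Lemma \ref{lemmain} then yields $\int_0^1|S_k|^{2x}\,d\alpha\ll N^{\frac{2x}{k}-\frac12+\varepsilon}$, so the diagonal contributes
\begin{align*}
r_2(0)\,r_k(0)\ll N^{\frac12+\varepsilon}\cdot N^{\frac{2x}{k}-\frac12+\varepsilon}=N^{\frac{2x}{k}+\varepsilon},
\end{align*}
which matches the off-diagonal bound and finishes the proof. The genuinely delicate part is therefore purely the bookkeeping—verifying that the stated $x$ is admissible for Lemma \ref{lemmain} at $\delta=\tfrac12$ across every case distinction—whereas the divisor estimate and the trivial total count are routine.
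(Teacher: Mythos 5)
Your proof is correct and takes essentially the same route as the paper's: both interpret the integral as a (weighted) count of solutions of $p_1^2-p_2^2=\sum_{i\le x}(v_i^k-u_i^k)$, split into the diagonal case $p_1=p_2$ (equivalently $h=0$) and the off-diagonal case, bound the off-diagonal via the divisor estimate coming from $p_1^2-p_2^2=(p_1-p_2)(p_1+p_2)$, and handle the diagonal by Lemma \ref{lemmain} with $\delta=\tfrac{1}{2}$. Your explicit verification that the stated $x$ is admissible for (\ref{mainx}) at $\delta=\tfrac{1}{2}$ in every case, including the overlap $k\in\{8,9\}$ where $[k\delta]=4$ and the two formulae happen to coincide, is a bookkeeping detail the paper leaves unstated.
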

\begin{proof}
$\int_0^1|S_2^2(\alpha)S_k^{2x}(\alpha)|d\alpha$ is no more than $N^{\varepsilon}$ times the number of solutions of the equation
$$t_1^2-t_2^2=y_1^k+y_2^k+\cdot\cdot\cdot+y_x^k-y_{x+1}^k-\cdot\cdot\cdot-y_{2x}^k$$
with $N^{\frac{1}{2}}<t_1,t_2\le 2N^{\frac{1}{2}}$ and $N^{\frac{1}{k}}<y_1,y_2,\cdot\cdot\cdot,y_{2x}\le 2N^{\frac{1}{k}}$. If $t_1\neq t_2$, the contribution is bounded by $N^{\frac{2x}{k}+\varepsilon}$.
If $t_1=t_2$, the contribution is bounded by $N^{\frac{1}{2}+\varepsilon}\int_0^1|S_k^{2x}|d\alpha$. Thus
$$\int_0^1|S_2^2(\alpha)S_k^{2x}(\alpha)|d\alpha\ll N^{\frac{2x}{k}+\varepsilon}+N^{\frac{1}{2}+\varepsilon}\int_0^1|S_k^{2x}(\alpha)|d\alpha.$$
What we need is
$$\int_0^1|S_k^{2x}(\alpha)|d\alpha\ll N^{\frac{2x}{k}-\frac{1}{2}+\varepsilon}.$$
Hence this lemma holds by Lemma \ref{lemmain} with $\delta=\frac{1}{2}$.
\end{proof}

\begin{lemma}\label{lemS24x}
For $k\ge 4$, we have
\begin{align*}
\int_0^1|S_2^2(\alpha)S_4^2(\alpha)S_k^{2x}(\alpha)|d\alpha\ll N^{\frac{2x}{k}+\frac{1}{2}+\varepsilon},
\end{align*}
where
\begin{align*}
x=\begin{cases}
\left\lceil (\frac{k}{4}+1-[\frac{k}{4}])2^{[\frac{k}{4}]-1}\right\rceil, &  4\le k\le 19,
\\ \left\lceil (\frac{k}{4}-\frac{1}{2}[\frac{k}{4}])([\frac{k}{4}]+1)\right\rceil, &  k\ge 20.
\end{cases}
\end{align*}
\end{lemma}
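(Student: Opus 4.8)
The plan is to mirror the proof of Lemma \ref{lemS2x}, reading the integral as a weighted count of solutions and then dissecting it according to whether the squares and the fourth powers lie on the diagonal. Since each of the $2+2+2x=O(1)$ logarithmic weights is $\ll N^\varepsilon$, the quantity $\int_0^1|S_2^2(\alpha)S_4^2(\alpha)S_k^{2x}(\alpha)|d\alpha$ is at most $N^\varepsilon$ times the number $\mathcal N$ of integer solutions of
\begin{align*}
t_1^2-t_2^2+u_1^4-u_2^4=y_1^k+\cdots+y_x^k-y_{x+1}^k-\cdots-y_{2x}^k,
\end{align*}
with $t_1,t_2\asymp N^{1/2}$, $u_1,u_2\asymp N^{1/4}$ and $y_1,\dots,y_{2x}\asymp N^{1/k}$, where passing from primes to integers only inflates the count. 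Writing $Y$ for the right-hand side, there are $\ll N^{2x/k}$ admissible tuples $(y_1,\dots,y_{2x})$, each determining $Y$.

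The main step is a case distinction on the difference $t_1^2-t_2^2$. When $t_1\ne t_2$ I would fix any of the $\ll N^{2x/k}\cdot N^{1/2}$ tuples $(y_1,\dots,y_{2x},u_1,u_2)$; this fixes $Y'=Y-u_1^4+u_2^4$ and reduces the equation to $t_1^2-t_2^2=Y'$. Because $t_1,t_2>0$ together with $t_1\ne t_2$ forces $Y'\ne0$, the factorisation $Y'=(t_1-t_2)(t_1+t_2)$, the bound $|Y'|\ll_k N$, and the divisor estimate show that at most $N^\varepsilon$ pairs $(t_1,t_2)$ can occur. This case therefore contributes $\ll N^{2x/k+1/2+\varepsilon}$.

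When $t_1=t_2$ the equation becomes $u_1^4-u_2^4=Y$, and I would split once more. If $u_1\ne u_2$, then $Y=u_1^4-u_2^4\ne0$; fixing the $\ll N^{2x/k}$ admissible tuples with $Y\ne0$ and factoring $u_1^4-u_2^4=(u_1^2-u_2^2)(u_1^2+u_2^2)=Y$ shows, again by the divisor bound, that at most $N^\varepsilon$ pairs $(u_1,u_2)$ occur; together with the $\ll N^{1/2}$ choices of $t_1=t_2$ this contributes $\ll N^{2x/k+1/2+\varepsilon}$. If instead $u_1=u_2$, then $Y=0$, so the surviving tuples $(y_1,\dots,y_{2x})$ are exactly those counted, as in Lemma \ref{lemS2x}, by $\int_0^1|S_k^{2x}(\alpha)|d\alpha$; multiplying by the $\ll N^{1/2}$ choices of $t_1=t_2$ and the $\ll N^{1/4}$ choices of $u_1=u_2$ gives a contribution $\ll N^{3/4+\varepsilon}\int_0^1|S_k^{2x}(\alpha)|d\alpha$.

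It remains to confirm that this diagonal term also meets the target, and this is exactly where the prescribed shape of $x$ is used. The stated $x$ is the value furnished by Lemma \ref{lemmain} at $\delta=1/4$: the ranges $4\le k\le19$ and $k\ge20$ correspond to $[k\delta]=[k/4]\le4$ and $[k/4]\ge5$, and on $16\le k\le19$ the displayed ceiling agrees with $\lceil 7k/4-20\rceil$, so Lemma \ref{lemmain} yields $\int_0^1|S_k^{2x}(\alpha)|d\alpha\ll N^{2x/k-1/4+\varepsilon}$. Hence the diagonal contributes $\ll N^{3/4}\cdot N^{2x/k-1/4+\varepsilon}=N^{2x/k+1/2+\varepsilon}$, in line with the other cases, and summing the three bounds proves the lemma. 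I expect the only real subtlety to be organisational: one must check that each off-diagonal difference ($Y'$ or $Y$) is genuinely nonzero so that the divisor bound is legitimate, and verify that the $x$ in the statement does coincide with the $\delta=1/4$ instance of Lemma \ref{lemmain} throughout the listed ranges; the analytic input is confined to that single application of Lemma \ref{lemmain}.
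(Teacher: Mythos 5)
Your proposal is correct and follows essentially the same route as the paper: interpret the integral as a solution count, dissect into the three cases $t_1\neq t_2$; $t_1=t_2,\ u_1\neq u_2$; $t_1=t_2,\ u_1=u_2$, handle the first two by divisor estimates, and reduce the diagonal to $\int_0^1|S_k^{2x}(\alpha)|\,d\alpha\ll N^{2x/k-1/4+\varepsilon}$ via Lemma \ref{lemmain} with $\delta=\tfrac14$. Two small remarks: since you pass wholly to integer variables, the diagonal count is strictly the mean value of the \emph{integer} exponential sum rather than of $S_k$ (harmless, as Hua's and Bourgain's bounds underlying Lemma \ref{lemmain} hold in that setting, or one can simply keep the variables prime as the paper implicitly does), and your verification that the stated $x$ coincides with the $[k/4]=4$ case $\lceil 7k/4-20\rceil$ of Lemma \ref{lemmain} for $16\le k\le 19$ is a worthwhile detail the paper glosses over.
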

\begin{proof}
$\int_0^1|S_2^2(\alpha)S_4^2(\alpha)S_k^{2x}(\alpha)|d\alpha$ is no more than $N^{\varepsilon}$ times the number of solutions for the equation
$$t_1^2-t_2^2=y_1^4-y_2^4+z_1^k+z_2^k+\cdot\cdot\cdot+z_x^k-z_{x+1}^k-\cdot\cdot\cdot-z_{2x}^k$$
with $P_2< t_1,t_2\le 2P_2$, $P_4< y_1,y_2\le 2P_4$ and $P_k< z_1,z_2,\cdot\cdot\cdot,z_{2x}\le 2P_k$, where $N/4<P_2^2, P_4^4, P_k^k\le N.$
If $t_1\neq t_2$, the contribution is bounded by $P_4^{2+\varepsilon}P_k^{2x}$. If $t_1=t_2, y_1\neq y_2$, the contribution is
bounded by $P_2P_k^{2x+\varepsilon}$. If $t_1=t_2,   y_1= y_2$, the contribution is bounded by $P_2^{1+\varepsilon}P_4\int_0^1|S_k(\alpha)|^{2x}d\alpha$.
Thus
\begin{align*}
\int_0^1|S_2^2(\alpha)S_4^2(\alpha)S_k^{2x}(\alpha)|d\alpha\ll N^{\varepsilon}P_4^{2+\varepsilon}P_k^{2x}+N^{\varepsilon}P_2P_4\int_0^1|S_k(\alpha)|^{2x}d\alpha.
\end{align*}
What we need is
$$\int_0^1|S_k^{2x}(\alpha)|d\alpha\ll N^{\frac{2x}{k}-\frac{1}{4}+\varepsilon}.$$
Hence this lemma holds by Lemma \ref{lemmain} with $\delta=\frac{1}{4}$.
\end{proof}

\begin{lemma}\label{lemS23k}
For $k\ge 3$, we have
\begin{align*}
\int_0^1|S_2^2(\alpha)S_3^2(\alpha)S_k^{2x}(\alpha)|d\alpha\ll N^{\frac{2x}{k}+\frac{2}{3}+\varepsilon},
\end{align*}
where
\begin{align*}
x=\begin{cases}
\left\lceil (\frac{k}{6}+1-[\frac{k}{6}])2^{[\frac{k}{6}]-1}\right\rceil, &  3\le k\le 23,\\
\left\lceil \frac{7k}{6}-20 \right\rceil, &  24\le k\le 29,\\
\left\lceil (\frac{k}{6}-\frac{1}{2}[\frac{k}{6}])([\frac{k}{6}]+1)\right\rceil, &  k\ge 30.
\end{cases}
\end{align*}
\end{lemma}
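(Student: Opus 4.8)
The plan is to follow verbatim the strategy of Lemmas \ref{lemS2x} and \ref{lemS24x}: interpret the integral as a weighted count of solutions to an underlying diophantine equation, separate the diagonal from the off-diagonal contributions, and reduce the diagonal term to a mean value of $S_k$ that is supplied by Lemma \ref{lemmain}. First I would observe that $\int_0^1|S_2^2(\alpha)S_3^2(\alpha)S_k^{2x}(\alpha)|\,d\alpha$ is at most $N^{\varepsilon}$ times the number of solutions of
$$t_1^2-t_2^2=u_1^3-u_2^3+z_1^k+\cdots+z_x^k-z_{x+1}^k-\cdots-z_{2x}^k$$
with $P_2<t_1,t_2\le 2P_2$, $P_3<u_1,u_2\le 2P_3$ and $P_k<z_1,\ldots,z_{2x}\le 2P_k$, where $N/4<P_2^2,P_3^3,P_k^k\le N$. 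I then split according to the coincidences $t_1=t_2$ and $u_1=u_2$, exactly as in the $S_4^2$ case, noting that the role previously played by the factor $P_4^2=N^{1/2}$ is now played by $P_3^2=N^{2/3}$, which is why the target exponent rises from $\tfrac12$ to $\tfrac23$.

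When $t_1\neq t_2$, the left-hand side is a nonzero integer of size $\ll N$; fixing the $2x+2$ variables $u_1,u_2,z_1,\ldots,z_{2x}$ determines the right-hand side, and the divisor bound yields $\ll N^{\varepsilon}$ admissible pairs $(t_1,t_2)$. Hence this case contributes $\ll N^{\varepsilon}P_3^2P_k^{2x}=N^{2x/k+2/3+\varepsilon}$, which is precisely the target order of magnitude. When $t_1=t_2$ but $u_1\neq u_2$, the equation reduces to $u_1^3-u_2^3=-(z_1^k+\cdots-z_{2x}^k)$; since $u_1^3-u_2^3$ is a nonzero difference of two cubes, it admits $\ll N^{\varepsilon}$ representations once the $z$'s are fixed, so after the $\ll P_2$ free choices of $t_1=t_2$ this case contributes $\ll N^{\varepsilon}P_2P_k^{2x}=N^{2x/k+1/2+\varepsilon}$, which is strictly smaller.

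The decisive case is $t_1=t_2$, $u_1=u_2$, where the equation collapses to $z_1^k+\cdots+z_x^k=z_{x+1}^k+\cdots+z_{2x}^k$. After the $\ll P_2^{1+\varepsilon}$ choices for $t_1=t_2$ and the $\ll P_3$ choices for $u_1=u_2$, this term is
$$\ll P_2^{1+\varepsilon}P_3\int_0^1|S_k(\alpha)|^{2x}\,d\alpha=N^{5/6+\varepsilon}\int_0^1|S_k^{2x}(\alpha)|\,d\alpha.$$
To keep this within $N^{2x/k+2/3+\varepsilon}$ I need exactly
$$\int_0^1|S_k^{2x}(\alpha)|\,d\alpha\ll N^{\frac{2x}{k}-\frac16+\varepsilon},$$
and this is where Lemma \ref{lemmain} enters, applied with $\delta=\tfrac16$ (consistently with the pattern $\delta=\tfrac12-\tfrac1m$ for an inserted factor $S_m^2$, giving $\delta=\tfrac14$ for $m=4$ and $\delta=\tfrac16$ for $m=3$). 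Summing the three contributions then gives the stated bound.

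The only genuine point to check is that the piecewise definition of $x$ in the statement coincides, branch by branch, with (\ref{mainx}) under the substitution $k\delta=k/6$, so that Lemma \ref{lemmain} is applicable in each range of $k$. This is pure bookkeeping: since $[k\delta]=[k/6]$, one has $[k/6]\le 3\iff k\le 23$, $[k/6]=4\iff 24\le k\le 29$, and $[k/6]\ge 5\iff k\ge 30$, matching the three cases of the $x$ in the lemma with the three cases of (\ref{mainx}). I therefore expect the main (though still routine) obstacle to be verifying that the chosen $x$ meets the threshold of Lemma \ref{lemmain} with $\delta=\tfrac16$ in every range; the counting steps are standard divisor-bound estimates requiring no new input.
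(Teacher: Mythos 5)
Your proposal is correct and is exactly the argument the paper intends: the paper's proof is the single sentence ``similar to Lemma~\ref{lemS24x} with $\delta=\frac{1}{6}$,'' and you have carried out precisely that adaptation, replacing $S_4^2$ by $S_3^2$ (so the off-diagonal term becomes $P_3^2P_k^{2x}=N^{2x/k+2/3}$ and the diagonal term $P_2P_3\int_0^1|S_k|^{2x}\,d\alpha$ forces $\delta=\tfrac16$ in Lemma~\ref{lemmain}), together with the correct branch-by-branch matching of $x$ with (\ref{mainx}) via $[k/6]\le 3\iff k\le 23$, $[k/6]=4\iff 24\le k\le 29$, $[k/6]\ge 5\iff k\ge 30$. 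No gaps.
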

\begin{proof}
The proof is similar as the proof of Lemma \ref{lemS24x} with $\delta=\frac{1}{6}$.
\end{proof}

\begin{lemma}\label{8233}
For $k\ge 3$, we have
\begin{align*}
\int_0^1|S_3^4(\alpha)S_k^{2x}(\alpha)|d\alpha\ll N^{\frac{2x}{k}+\frac{1}{3}+\varepsilon},
\end{align*}
where
\begin{align*}
x=\begin{cases}
\left\lceil (\frac{2k}{3}+1-[\frac{2k}{3}])2^{[\frac{2k}{3}]-1} \right\rceil, &  3\leq k \leq 5, \\
\left\lceil \frac{14k}{3}-20 \right\rceil, &  k=6,7,\\
\left\lceil (\frac{2k}{3}-\frac{1}{2}[\frac{2k}{3}])([\frac{2k}{3}]+1)\right\rceil, &  k\ge 8.
\end{cases}
\end{align*}
\end{lemma}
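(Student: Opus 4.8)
The plan is to read the integral as a weighted count of solutions of a Diophantine equation and to match that count against its expected main term. By orthogonality, $\int_0^1|S_3^4(\alpha)S_k^{2x}(\alpha)|\,d\alpha$ is, up to a factor $N^{\varepsilon}$, the number of solutions of
$$
y_1^3+y_2^3-y_3^3-y_4^3=z_1^k+\cdots+z_x^k-z_{x+1}^k-\cdots-z_{2x}^k
$$
with $P_3<y_i\le 2P_3$ and $P_k<z_j\le 2P_k$, where $N/4<P_3^3,P_k^k\le N$. Writing $f(h)$ for the number of $(y_1,y_2,y_3,y_4)$ with $y_1^3+y_2^3-y_3^3-y_4^3=h$ and $G(h)=\int_0^1 e(\alpha h)\,|S_k(\alpha)|^{2x}\,d\alpha$ for the corresponding count on the $k$-th power side, the integral equals $\sum_h f(h)G(h)$. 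Since $P_3^4P_k^{2x}/N=N^{2x/k+1/3}$ is exactly the equidistribution heuristic, and $x$ is large enough that this dominates the fully diagonal contribution $N^{2/3+x/k}$, the exponent $\frac{2x}{k}+\frac13$ is the correct order of magnitude, and the task is to prove a matching upper bound.

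First I would dispose of the diagonal term $h=0$. Here $f(0)$ counts solutions of $y_1^3+y_2^3=y_3^3+y_4^3$, and by Lemma \ref{LemHuaBou} (the case $k=3$, $j=2$, i.e. $\int_0^1|S_3(\alpha)|^4\,d\alpha\ll N^{2/3+\varepsilon}$) one has $f(0)\ll N^{2/3+\varepsilon}$, while $G(0)=\int_0^1|S_k(\alpha)|^{2x}\,d\alpha$. Invoking Lemma \ref{lemmain} with $\delta=\frac23$ gives $G(0)\ll N^{2x/k-2/3+\varepsilon}$, so that $f(0)G(0)\ll N^{2x/k+\varepsilon}$, well within the claimed bound. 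I would stress that this is precisely why the value of $x$ recorded in the statement is the right one: the displayed $x$ is exactly formula (\ref{mainx}) evaluated at $k\delta=\frac{2k}{3}$, so the choice of $x$ is dictated by making this diagonal piece admissible through Lemma \ref{lemmain}.

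The hard part will be the off-diagonal sum $\sum_{h\neq 0}f(h)G(h)$, which carries the bulk $N^{2x/k+1/3}$. A purely elementary treatment controls $f(h)$ only through $f(h)\le f(0)\ll N^{2/3}$ (for instance by fixing $y_3,y_4$ and using that $y_1^3+y_2^3$ has $O(N^{\varepsilon})$ representations as a sum of two cubes), and thus loses a factor $N^{1/3}$, producing the too-weak bound $N^{2x/k+2/3}$; no rearrangement of variables that leaves two cubes free can avoid this. The efficient route is to split $[0,1]$ into major and minor arcs in $\alpha$: on the major arcs one extracts the genuine main term of size $N^{2x/k+1/3}$, while on the minor arcs a Weyl-type estimate $|S_3(\alpha)|\ll N^{1/3-\eta}$ allows one to write $\int_{\mathrm{minor}}|S_3|^4|S_k|^{2x}\ll N^{4/3-4\eta}\int_0^1|S_k|^{2x}\,d\alpha$ and close the argument via Lemma \ref{lemmain} with $\delta=\frac23$ once $\eta\ge\frac{1}{12}$. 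Equivalently, one feeds the distributional information on the spacing of sums of two cubes into $\sum_{h\neq0}f(h)G(h)$, using $G(h)\le G(0)$ together with $\sum_h G(h)=P_k^{2x}$. I expect this off-diagonal estimate—rather than the routine diagonal reduction carried out in the spirit of Lemma \ref{lemS24x}—to be the crux, since it is here that the correct exponent $+\frac13$, as opposed to $+\frac23$, has to be forced out.
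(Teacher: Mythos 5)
Your opening reduction and your treatment of the diagonal are sound: the identification of the displayed $x$ as formula (\ref{mainx}) at $\delta=\tfrac23$ is exactly right, and $f(0)G(0)\ll N^{2/3+\varepsilon}\cdot N^{2x/k-2/3+\varepsilon}$ via Lemma \ref{LemHuaBou} and Lemma \ref{lemmain} is correct. You also correctly located the crux in the off-diagonal sum. But precisely there your proposal has no working argument, and both routes you sketch require estimates that are unavailable. The minor-arc route needs a pointwise saving $S_3(\alpha)\ll N^{1/3-\eta}$ with $\eta\ge\tfrac1{12}$; for cube Weyl sums over \emph{primes} nothing близ this is known --- the bound available in this paper (Lemma \ref{nS2S3S5}, resting on Zhao's estimate) saves only $\eta=\tfrac1{36}$, which gives $N^{2x/k+2/3-1/9}=N^{2x/k+5/9}$, far above the target; and your major-arc step (``extract the genuine main term'') is only asserted, which for prime exponential sums is itself a nontrivial task. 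The alternative route, $\sum_{h\neq0}f(h)G(h)\le\max_{h\neq0}f(h)\sum_hG(h)$, needs $\max_{h\neq0}f(h)\ll N^{1/3+\varepsilon}$; this is far out of reach --- fixing two of the four cube variables and using the divisor bound gives only $f(h)\ll N^{2/3+\varepsilon}$, and improving that to $N^{1/3+\varepsilon}$ for the quaternary cubic $y_1^3+y_2^3-y_3^3-y_4^3=h$ is essentially an open problem. So there is a genuine gap.

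The idea you are missing is Weyl differencing, which sidesteps both obstacles. The paper first discards primality,
\begin{align*}
\int_0^1|S_3^4(\alpha)S_k^{2x}(\alpha)|\,d\alpha\ll N^{\varepsilon}\int_0^1|f_3^4(\alpha)S_k^{2x}(\alpha)|\,d\alpha,
\qquad f_3(\alpha)=\sum_{t\sim P_3}e(\alpha t^3),
\end{align*}
and then applies Lemma 2.3 of \cite{Vaughan} to get
$|f_3(\alpha)|^4\ll P_3\sum_{|h_1|<P_3}\sum_{|h_2|<P_3}\sum_{t\in\mathcal{J}}e\big(\alpha\,3h_1h_2(2t+h_1+h_2)\big)$.
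After integration, the four free cube variables are replaced by the single product form $\Delta=3h_1h_2(2t+h_1+h_2)$, and this is what forces the exponent $\tfrac13$ instead of $\tfrac23$: when the $k$-th power side equals a fixed $m\neq0$, the equation $\Delta=m$ has only $O(N^{\varepsilon})$ solutions $(h_1,h_2,t)$ by the divisor bound (compare with the $N^{2/3+\varepsilon}$ you could not get below), so the off-diagonal count is $\ll N^{2x/k+\varepsilon}$; while $\Delta=0$ forces $h_1h_2(2t+h_1+h_2)=0$, i.e.\ $O(P_3^{2+\varepsilon})$ triples, contributing $P_3^{2+\varepsilon}\int_0^1|S_k^{2x}(\alpha)|\,d\alpha$. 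Multiplying by the prefactor $P_3$ yields
\begin{align*}
\int_0^1|S_3^4(\alpha)S_k^{2x}(\alpha)|\,d\alpha\ll N^{\frac13+\frac{2x}{k}+\varepsilon}+N^{1+\varepsilon}\int_0^1|S_k^{2x}(\alpha)|\,d\alpha,
\end{align*}
and the second term is admissible exactly because $\int_0^1|S_k^{2x}(\alpha)|\,d\alpha\ll N^{\frac{2x}{k}-\frac23+\varepsilon}$, i.e.\ Lemma \ref{lemmain} with $\delta=\tfrac23$ --- the one ingredient your plan did secure correctly.
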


\begin{proof}
We have
\begin{align}\label{4z1}
\int_0^1|S_3^4(\alpha)S_k^{2x}(\alpha)|d\alpha\ll N^{\varepsilon}\int_0^1|f_3^4(\alpha)S_k^{2x}(\alpha)|d\alpha,
\end{align}
where $$f_3(\alpha)=\sum_{t\sim P_3}e(\alpha t^3).$$
By Lemma 2.3 in \cite{Vaughan}, one has
\begin{align*}
\big|f_3(\alpha)\big|^4\ll P_3\sum_{|h_1|< P_3}\sum_{|h_2|< P_3}\sum_{x\in \mathcal{J}}e(\alpha\Delta(t^3; {\bf h})),
\end{align*}
where $\mathcal{J}=\mathcal{J}({\bf h})$ is a subinterval of $[P_3, 2P_3)$ and $\Delta(t^3; {\bf h})$ is the second-order forward
difference of the function $t\rightarrow t^3$ with steps $h_1, h_2$, that is,
$$\Delta(t^3; {\bf h})=3h_1h_2(2t+h_1+h_2).$$
Thus, we deduce from (\ref{4z1}) that
\begin{align*}
\int_0^1|S_3^4(\alpha)S_k^{2x}(\alpha)|d\alpha\ll P_3J(P_3),
\end{align*}
where $J(P_3)$ is the number of solutions of the diophantine equation
\begin{align}\label{4z2}
\Delta(t^3; {\bf h})=3h_1h_2(2t+h_1+h_2)=p_1^k+p_2^k+\cdot\cdot\cdot+p_x^k-q_1^k-q_2^k-\cdot\cdot\cdot-q_x^k
\end{align}
subject to
\begin{align}\label{4z3}
P_3\le t\le 2P_3, \quad |h_i|<P_3, \quad P_k<p_1,\cdot\cdot\cdot, p_x,q_1,\cdot\cdot\cdot, q_x\le 2P_k, \quad N/4<P_3^3, P_k^k\le N.
\end{align}
The number of solutions of (\ref{4z2}), (\ref{4z3}) with $\Delta(t^3; {\bf h})=0$ is bounded by\\
$P_3^{2+\varepsilon}\int_0^1\big|S_k^{2x}(\alpha)\big|d\alpha$.
The number of solutions of (\ref{4z2}), (\ref{4z3}) with $\Delta(t^3; {\bf h})\neq 0$ is bounded by
$N^{\frac{2x}{k}+\varepsilon}$.Thus,
\begin{align*}
\int_0^1|S_3^4(\alpha)S_k^{2x}(\alpha)|d\alpha\ll N^{1+\varepsilon}\int_0^1\big|S_k^{2x}(\alpha)\big|d\alpha+N^{\frac{1}{3}+\frac{2x}{k}+\varepsilon}.
\end{align*}
Thus, we just need
$$
\int_0^1|S_k^{2x}(\alpha)|d\alpha\ll N^{\frac{2x}{k}-\frac{2}{3}+\varepsilon}.
$$
Hence it establishes this  lemma by Lemma \ref{lemmain} with $\delta=\frac{2}{3}$.
\end{proof}

 \vskip3mm

\bigskip

\section{Proof of Theorem \ref{Thm236k}}
The purpose of this section is to concentrate on proving Theorem \ref{Thm236k}. We establish Theorem \ref{Thm236k} by means of the Hardy-Littlewood
method. We will give the proof of Theorem \ref{Thm244k} in Sect. 4 and will describe the straight forward modifications needed for
Theorem \ref{Thm235k} in Sect. 5. In Sect. 6, we will give the outline of proof of Theorem \ref{Thm3333k}.

Let $S_k(\alpha)$ be defined as in Lemma \ref{LemHuaBou}.
We denote
\begin{align}\label{r236k}
r(n)=\sum_{\substack{p_1^2+p_2^3+p_3^6+p_4^k=n\\ N/4<p_1^2,p_2^3,p_3^6,p_4^k\le N}}(\log p_1)(\log p_2)
(\log p_3)(\log p_4),
\end{align}
where $p_1$, $p_2$, $p_3$, $p_4$ are primes.
Let $Q=N^{\frac{2}{5k}}$, and write $\mathfrak{M}(Q)$ for the union of the intervals
$$\{\alpha\in [0,1]: |q\alpha-a|\le QN^{-1}\}$$
with $1\le a\le q, (a,q)=1$ and $1\le q\le Q$. We define $ \mathfrak{M}=\mathfrak{M}(Q), \textrm{  }\ \mathfrak{m}=[0,1]\backslash\mathfrak{M}$. Thus the formula
(\ref{r236k}) becomes
$$r(n)=\Big\{\int_{\mathfrak{M}}+\int_{\mathfrak{m}}\Big\}S_2(\alpha)S_3(\alpha)S_6(\alpha)S_k(\alpha)e(-n\alpha)d\alpha.$$
Whenever $\mathfrak{B}\subset [0,1]$ is measurable, we put
$$r_{\mathfrak{B}}(n, N)=\int_{\mathfrak{B}}S_2(\alpha)S_3(\alpha)S_6(\alpha)S_k(\alpha)e(-n\alpha)d\alpha.$$
Then we have
$$r(n)=r_{[0,1]}(n, N)=\int_ 0^1S_2(\alpha)S_3(\alpha)S_6(\alpha)S_k(\alpha)e(-n\alpha)d\alpha.$$
Next we will deal with the integral of major arcs and minor arcs respectively. Applying the now standard methods of
enlarging major arcs (c.f. \cite{LiuZhan}), we can get the following result:
\begin{lemma}
For all even integer $n$ with $N<n \le 2N$, one has $r_{\mathfrak{M}}(n, N)\gg N^{\frac{1}{k}}$.
\end{lemma}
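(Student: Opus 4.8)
The plan is to prove the major-arc lower bound $r_{\mathfrak{M}}(n,N)\gg N^{1/k}$ by the standard Hardy--Littlewood analysis: replace each generating function $S_j(\alpha)$ on the major arcs by its expected principal-value approximation, extract the singular series and singular integral, and verify that both are bounded away from zero under the necessary local conditions (here, $n$ even). I would first recall the classical approximation on $\mathfrak{M}(Q)$: for $\alpha=a/q+\beta$ with $1\le a\le q\le Q$, $(a,q)=1$ and $|\beta|\le QN^{-1}$, one has
\begin{align*}
S_j(\alpha)=\frac{C(q,a;j)}{\varphi(q)}\,v_j(\beta)+O\bigl(N^{1/j}\exp(-c\sqrt{\log N})\bigr),
\end{align*}
where $C(q,a;j)=\sum_{\substack{1\le h\le q\\(h,q)=1}}e(ah^{j}/q)$ is the usual complete exponential sum over reduced residues and $v_j(\beta)=\tfrac1j\int_{N/4}^{N}u^{1/j-1}e(\beta u)\,du$ is the smooth archimedean integral. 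This is precisely the content of the enlarging-major-arcs machinery of Liu and Zhan \cite{LiuZhan} cited in the text, and since $Q=N^{2/(5k)}$ is a small power of $N$, the error terms are power-saving and can be absorbed.

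Next I would insert these four approximations into the product $S_2S_3S_6S_k$ and multiply out. The main term factorizes as a product of the arithmetic part and the analytic part: integrating over $\beta\in[-QN^{-1},QN^{-1}]$ and summing the residue sums over $q\le Q$, $a$ coprime to $q$, one obtains
\begin{align*}
r_{\mathfrak{M}}(n,N)=\mathfrak{S}(n)\,J(n)+\text{(smaller terms)},
\end{align*}
where $\mathfrak{S}(n)=\sum_{q=1}^{\infty}\frac{1}{\varphi(q)^{4}}\sum_{\substack{1\le a\le q\\(a,q)=1}}C(q,a;2)C(q,a;3)C(q,a;6)C(q,a;k)\,e(-na/q)$ is the singular series and $J(n)=\int_{-\infty}^{\infty}v_2(\beta)v_3(\beta)v_6(\beta)v_k(\beta)e(-n\beta)\,d\beta$ is the singular integral. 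I would complete the singular series to all $q$ (the tail beyond $Q$ and the tail of the $\beta$-integral both contribute negligibly, by standard bounds $C(q,a;j)\ll q^{1/2+\varepsilon}$ and the decay $v_j(\beta)\ll N^{1/j}(1+N|\beta|)^{-1/j}$). A Fourier/counting argument then identifies $J(n)$ as the volume of the solution set, giving $J(n)\asymp N^{1/2+1/3+1/6+1/k-1}=N^{1/k}$, which is where the claimed order of magnitude comes from.

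The main obstacle, and the only part that genuinely uses arithmetic rather than bookkeeping, is showing $\mathfrak{S}(n)\gg 1$ uniformly for $n$ in the relevant residue class. Since $\mathfrak{S}(n)$ is multiplicative in $q$, I would factor it as an Euler product $\prod_p \sigma_p(n)$ over the local densities $\sigma_p(n)$ and bound each factor: for all but finitely many primes one shows $\sigma_p(n)=1+O(p^{-3/2})$ so that the product converges and is bounded below, while for the finitely many small primes one must check $\sigma_p(n)>0$, i.e.\ that the congruence $x_1^2+x_2^3+x_3^6+x_4^k\equiv n\pmod{p^{\nu}}$ is solvable in units for every prime power. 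The critical case is $p=2$: this is exactly why the theorem restricts to even $n$, and one verifies solvability modulo powers of $2$ in that case. I expect the absolute convergence and the uniform positivity $\mathfrak{S}(n)\gg1$ to follow from these local computations, after which the lemma is immediate. Throughout, the convenient size $Q=N^{2/(5k)}$ guarantees that every truncation error is $O(N^{1/k-\delta})$ for some $\delta>0$, so the error terms do not interfere with the main term.
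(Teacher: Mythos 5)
Your overall skeleton (main term equals singular series times singular integral, $J(n)\asymp N^{1/2+1/3+1/6+1/k-1}=N^{1/k}$, positivity of $\mathfrak{S}(n)$ via local solvability with $p=2$ as the critical prime) is the standard one, and it is the same route the paper gestures at by citing the enlarging-major-arcs method of Liu and Zhan; indeed the paper gives no details at all for this lemma. But your execution has a genuine gap at the decisive analytic step. The pointwise approximation
\begin{equation*}
S_j(\alpha)=\frac{C(q,a;j)}{\varphi(q)}\,v_j(\beta)+O\bigl(N^{1/j}\exp(-c\sqrt{\log N})\bigr)
\end{equation*}
is known unconditionally, via Siegel--Walfisz, only for moduli $q\le(\log N)^{A}$ with $A$ fixed. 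For $q$ as large as a power of $N$ (here $Q=N^{2/(5k)}$), such a pointwise formula for exponential sums over primes in arithmetic progressions is essentially of GRH strength and is not a theorem. Attributing it to the enlarging-major-arcs machinery inverts the logic: that machinery exists precisely because the pointwise formula is unavailable on wide arcs. What Liu--Zhan actually do is expand $S_j$ into Dirichlet characters, extract the main term from the principal characters, and control the non-principal contributions \emph{on average} over $q\le Q$ using the large sieve, zero-density estimates, and a separate treatment of a possible exceptional zero --- never pointwise in $\alpha$. By quoting the pointwise formula you have assumed the hard part of the proof.

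There is also a quantitative failure even if one grants your approximation. The error term $\exp(-c\sqrt{\log N})$ is not ``power-saving'' --- it decays more slowly than any fixed power of $N$ --- and the major arcs $\mathfrak{M}(Q)$ have total measure $\asymp Q^{2}/N$. Replacing one factor by its error and bounding the other three trivially gives a pointwise loss of order $N^{1+1/k}\exp(-c\sqrt{\log N})$, so the accumulated error over $\mathfrak{M}$ is of order $N^{1/k}Q^{2}\exp(-c\sqrt{\log N})=N^{1/k+4/(5k)}\exp(-c\sqrt{\log N})$, which \emph{dominates} the main term $N^{1/k}$ because $N^{4/(5k)}$ grows faster than $\exp(c\sqrt{\log N})$. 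So the bookkeeping as written does not close; the truncation and completion errors must instead be beaten by the mean-value (character-sum) argument. The arithmetic portion of your sketch --- the Euler product, $\sigma_p(n)=1+O(p^{-3/2})$ for large $p$, solvability modulo powers of $2$ for even $n$, and the evaluation of the singular integral --- is correct and standard; it is the analytic treatment of the error terms on arcs with $q$ up to $N^{2/(5k)}$ that must be replaced by the argument the paper's citation actually refers to.
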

To estimate the integral of the minor arcs, we split the minor arcs in two part. Let $1\le Y\le N^{\frac{1}{8}}$, and denote $\mathfrak{N}$
the union of the pairwise disjoint intervals
$$\mathfrak{N}_{q,a}(Y)=\{\alpha\in[0,1]: |q\alpha-a|\le Y/N\}$$
with $1\le a\le q, (a,q)=1$ and $1\le q\le Y$. We write $\mathfrak{N}=\mathfrak{N}(N^{\frac{1}{8}})$ and $\mathfrak{n}=\mathfrak{m}\backslash \mathfrak{N}.$
\begin{lemma}\label{nS2S3S5}
For $\alpha\in \mathfrak{n}$, we have
$$
S_2(\alpha)\ll N^{\frac{1}{2}-\frac{1}{16}+\varepsilon}, \quad
S_3(\alpha)\ll N^{\frac{1}{3}-\frac{1}{36}+\varepsilon}, \quad
S_4(\alpha)\ll N^{\frac{1}{4}-\frac{1}{96}+\varepsilon}.
$$
\end{lemma}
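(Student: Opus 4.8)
The plan is to exploit the single defining feature of the deep minor arcs $\mathfrak{n}$—that $\alpha \notin \mathfrak{N}(N^{1/8})$—to force any good rational approximation to $\alpha$ to have a denominator of intermediate size, and then to feed this information into the classical minor-arc estimate for Weyl sums over primes. Since the three asserted bounds are instances of one and the same argument with $k \in \{2,3,4\}$, I would treat them uniformly.

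First I would pin down the Diophantine approximation available on $\mathfrak{n}$. By Dirichlet's theorem with modulus $N^{7/8}$ there are coprime integers $a,q$ with $1 \le q \le N^{7/8}$ and $|q\alpha - a| \le N^{-7/8}$. If we had $q \le N^{1/8}$, then, since $N^{-7/8} = N^{1/8}/N$, the pair $(a,q)$ would place $\alpha$ in the arc $\mathfrak{N}_{q,a}(N^{1/8}) \subset \mathfrak{N}$, contradicting $\alpha \in \mathfrak{n} = \mathfrak{m}\setminus\mathfrak{N}$. (The degenerate possibilities $a=0$ or $a=q$ are excluded, as they would drag $\alpha$ into the principal major arc and so violate $\alpha \in \mathfrak{m}$.) Hence every $\alpha \in \mathfrak{n}$ admits an approximation with
$$N^{1/8} < q \le N^{7/8}, \qquad \bigl|\alpha - a/q\bigr| \le \frac{1}{q N^{7/8}} \le q^{-2},$$
so the hypotheses of the standard Weyl-sum estimate are in force.

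Next I would invoke the classical minor-arc bound for exponential sums over primes (Weyl's inequality combined with Vaughan's identity; see \cite{Vaughan}): for $k \ge 2$ and $|\alpha - a/q| \le q^{-2}$ with $(a,q)=1$, and since $p^k \asymp N$ means $p \asymp N^{1/k}$, one has
$$S_k(\alpha) \ll N^{\frac{1}{k}+\varepsilon}\bigl(q^{-1} + N^{-\frac{1}{2k}} + q N^{-1}\bigr)^{2^{1-k}}.$$
I would then evaluate the bracket over the admissible window for $q$: because $q > N^{1/8}$ we get $q^{-1} < N^{-1/8}$; because $q \le N^{7/8}$ we get $qN^{-1} \le N^{-1/8}$; and for $k \le 4$ the middle term obeys $N^{-1/(2k)} \le N^{-1/8}$. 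Thus the bracket is $\ll N^{-1/8}$ uniformly on $\mathfrak{n}$, whence $S_k(\alpha) \ll N^{\frac{1}{k}-\frac{2^{1-k}}{8}+\varepsilon}$. Specializing gives $S_2(\alpha) \ll N^{1/2-1/16+\varepsilon}$, $S_3(\alpha) \ll N^{1/3-1/32+\varepsilon}$, and $S_4(\alpha) \ll N^{1/4-1/64+\varepsilon}$; as $1/32 > 1/36$ and $1/64 > 1/96$, these are at least as strong as the stated bounds, which follow immediately.

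The genuinely hard part is already encapsulated in the prime Weyl-sum estimate: producing the $2^{1-k}$ saving requires Vaughan's identity to split the von Mangoldt–weighted sum into Type I and Type II bilinear pieces, Weyl differencing of each piece, and control of the diagonal term. In the paper this should be quoted rather than reproved. Everything else—the Dirichlet step and the elementary interval arithmetic that collapses the bracket to $N^{-1/8}$—is routine; the only point needing care is the endpoint bookkeeping $a \in \{0,q\}$, which is disposed of by $\alpha \in \mathfrak{m}$.
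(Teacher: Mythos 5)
Your opening reduction is sound, and it is structurally the same first step as in the paper: Dirichlet's theorem combined with $\alpha\notin\mathfrak{N}(N^{1/8})$ forces any good rational approximation to satisfy $q+N|q\alpha-a|\gg N^{1/8}$ (in your normalisation, $N^{1/8}<q\le N^{7/8}$). The genuine gap is the exponential-sum estimate you feed this into. The inequality
\begin{equation*}
S_k(\alpha)\ll N^{\frac{1}{k}+\varepsilon}\bigl(q^{-1}+N^{-\frac{1}{2k}}+qN^{-1}\bigr)^{2^{1-k}}
\end{equation*}
is not the classical minor-arc bound for Weyl sums over primes: you have attached the exponent $2^{1-k}$ of Weyl's inequality for the \emph{complete} sum $\sum_{n\sim N^{1/k}}e(\alpha n^k)$ to the prime-sum bracket. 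What Vaughan's identity plus Weyl differencing actually yields (Vinogradov; Harman, ``Trigonometric sums over primes I'') is the same bracket raised to the power $4^{1-k}$, the extra square root being the cost of the Cauchy--Schwarz step on the Type II bilinear sums. With the correct exponent your computation gives only $S_2\ll N^{\frac12-\frac{1}{32}+\varepsilon}$, $S_3\ll N^{\frac13-\frac{1}{128}+\varepsilon}$, $S_4\ll N^{\frac14-\frac{1}{512}+\varepsilon}$, all of which fall short of the required savings $\frac{1}{16}$, $\frac{1}{36}$, $\frac{1}{96}$. Conversely, if your stated inequality were a theorem, it would give $\sum_{p\sim P}e(\alpha p^2)\ll P^{3/4+\varepsilon}$ on the classical minor arcs, far beyond anything known for quadratic Weyl sums over primes; so it cannot be quoted from the literature.

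The lemma in fact cannot be recovered from the classical Vinogradov--Harman bound at all: the savings $\frac{1}{16}$, $\frac{1}{36}$, $\frac{1}{96}$ are exactly those of modern hybrid estimates, and that is what the paper uses. The paper applies Dirichlet with modulus $N^{5/12}$ and then quotes Theorem 1 of \cite{Kumchev} for $k=2,4$ and Lemma 2.3 of \cite{Zhao2} for $k=3$, which have the shape
\begin{equation*}
S_k(\alpha)\ll N^{\frac{1}{k}-\rho_k+\varepsilon}+\frac{N^{\frac{1}{k}+\varepsilon}}{(q+N|q\alpha-a|)^{1/2}},\qquad \rho_2=\tfrac{1}{16},\ \rho_3=\tfrac{1}{36},\ \rho_4=\tfrac{1}{96},
\end{equation*}
so the pointwise saving is built into the cited theorems (whose proofs rest on sieve-theoretic bilinear decompositions going beyond Vaughan's identity with Weyl differencing), while the minor-arc condition is used only to kill the $q$-dependent term: $q+N|q\alpha-a|\gg N^{1/8}$ makes that term $\ll N^{\frac{1}{k}-\frac{1}{16}+\varepsilon}$, which is acceptable in all three cases. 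To repair your argument, replace your cited estimate by these hybrid bounds; your Dirichlet step and interval arithmetic then go through essentially as in the paper.
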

\begin{proof}
For any given $\alpha\in \mathbb{N}$, by Dirichlet's approximation theorem, there exist $a\in \mathbb{Z}$ and $q\in \mathbb{N}$ with
$$(a,q)=1, 1\le q\le N^{\frac{5}{12}} \textrm{ and }\ |q\alpha-a|\le N^{-\frac{5}{12}}.$$
Then by Theorem 1 in \cite{Kumchev}, one has
$$
S_2(\alpha)\ll N^{\frac{1}{2}-\frac{1}{16}+\varepsilon}+\frac{N^{\frac{1}{2}+\varepsilon}}{(q+N|q\alpha-a|)^{\frac{1}{2}}},
$$
and
$$
S_4(\alpha)\ll N^{\frac{1}{4}-\frac{1}{96}+\varepsilon}+\frac{N^{\frac{1}{4}+\varepsilon}}{(q+N|q\alpha-a|)^{\frac{1}{2}}},
$$
and by Lemma 2.3 in \cite{Zhao2}, one has
$$S_3(\alpha)\ll N^{\frac{1}{3}-\frac{1}{36}+\varepsilon}+\frac{N^{\frac{1}{3}+\varepsilon}}{(q+N|q\alpha-a|)^{\frac{1}{2}}}.$$
If $\alpha\in \mathfrak{n}$, then
$$q> N^{\frac{1}{8}} \textrm{ or }\ q\le N^{\frac{1}{8}}, \textrm{  }\ N^{-\frac{7}{8}}\le |q\alpha-a|<N^{-\frac{5}{12}}.$$
In any case, we have
$$q+|q\alpha-a|\gg N^{\frac{1}{8}},$$
then this lemma clearly holds.
\end{proof}
\noindent\textit{Proof of Theorem 1.1.}
 By Bessel's inequality, we have
\begin{align*}
\sum_{N< n \le 2N} & \Big|\int_{\mathfrak{m}}S_2(\alpha)S_3(\alpha)S_6(\alpha)S_k(\alpha)e(-n\alpha)d\alpha\Big|^2
\le
\int_{\mathfrak{m}}\big|S_2^2(\alpha)S_3^2(\alpha)S_6^2(\alpha)S_k^2(\alpha)\big|d\alpha.
\end{align*}
To prove Theorem \ref{Thm236k}, it suffices to show that
\begin{align}\label{thm1236k}
\int_{\mathfrak{m}}\big|S_2^2(\alpha)S_3^2(\alpha)S_6^2(\alpha)S_k^2(\alpha)\big|d\alpha\ll N^{1+\frac{2}{k}-\theta_1(k)+\varepsilon},
\end{align}
where
$\theta_1(k)$ is defined in Theorem \ref{Thm236k}.

Obviously, we know that
\begin{align*}
\int_{\mathfrak{m}}\big|S_2^2(\alpha)S_3^2(\alpha)S_6^2(\alpha)S_k^2(\alpha)\big|d\alpha\ll &
\int_{\mathfrak{N}\backslash\mathfrak{M}}\big|S_2^2(\alpha)S_3^2(\alpha)S_6^2(\alpha)S_k^2(\alpha)\big|d\alpha
\\
 &+ \int_{\mathfrak{n}}\big|S_2^2(\alpha)S_3^2(\alpha)S_6^2(\alpha)S_k^2(\alpha)\big|d\alpha.
\end{align*}
By the estimate on Page 80 in \cite{Brudern}, one has
\begin{align}\label{thm1236km}
\int_{\mathfrak{N}\backslash\mathfrak{M}}\big|S_2^2(\alpha)S_3^2(\alpha)S_6^2(\alpha)S_k^2(\alpha)\big|d\alpha\ll
N^{1+\frac{2}{k}-\frac{1}{4k}+\varepsilon} \ll N^{1+\frac{2}{k}-\theta_1(k)+\varepsilon}
\end{align}
since $\frac{1}{4k}>\theta_1(k)$ for all $k\ge 6$.

Next we estimate $\int_{\mathfrak{n}}\big|S_2^2(\alpha)S_3^2(\alpha)S_6^2(\alpha)S_k^2(\alpha)\big|d\alpha$.

For $k=6$, by Lemma \ref{lemS2x} and Lemma \ref{nS2S3S5}, one has
\begin{align}\label{thm1k6}
\int_{\mathfrak{n}} & \big|S_2^2(\alpha)S_3^2(\alpha)S_6^4(\alpha)\big|d\alpha
\notag
\\ \ll & \sup_{\alpha\in\mathfrak{n}}|S_3(\alpha)|^{\frac{2}{3}}
\Big(\int_0^1\big|S_2^2(\alpha)S_6^8(\alpha)\big|d\alpha\Big)^{\frac{1}{3}}
\Big(\int_0^1\big|S_2^2(\alpha))S_3^2(\alpha)S_6^2(\alpha)\big|d\alpha\Big)^{\frac{2}{3}}
\notag
\\ \ll &  N^{1+\frac{1}{3}-\frac{1}{54}+\varepsilon}.
\end{align}

For $k=7$, by Lemma \ref{lemS2x} and Lemma \ref{nS2S3S5}, we have
\begin{align}\label{thm1k7}
\int_{\mathfrak{n}} & \big|S_2^2(\alpha)S_3^2(\alpha)S_6^2(\alpha)S_7^2(\alpha)\big|d\alpha
\notag
\\ \ll & \sup_{\alpha\in\mathfrak{n}}|S_3(\alpha)|^{\frac{4}{9}}
\Big(\int_0^1\big|S_2^2(\alpha)S_6^8(\alpha)\big|d\alpha\Big)^{\frac{1}{18}}
\Big(\int_0^1\big|S_2^2(\alpha)S_7^{12}(\alpha)\big|d\alpha\Big)^{\frac{1}{6}}
\notag
\\ &\times  \Big(\int_0^1\big|S_2^2(\alpha)S_3^2(\alpha)S_6^2(\alpha)\big|d\alpha\Big)^{\frac{7}{9}}
\notag
\\ \ll &  N^{1+\frac{2}{7}-\frac{1}{81}+\varepsilon}.
\end{align}

For $k\ge 8$ and $x$ in the form (\ref{236kx}), by Lemma \ref{lemS2x} and Lemma \ref{nS2S3S5}, we have
\begin{align}\label{thm1k8}
\int_{\mathfrak{n}} & \big|S_2^2(\alpha)S_3^2(\alpha)S_6^2(\alpha)S_k^2(\alpha)\big|d\alpha
\notag
\\ \ll & \sup_{\alpha\in\mathfrak{n}}|S_3(\alpha)|^{\frac{2}{3x}}
\Big(\int_0^1\big|S_2^2(\alpha)S_6^8(\alpha)\big|d\alpha\Big)^{\frac{1}{3x}}
\Big(\int_0^1\big|S_2^2(\alpha)S_3^{2}(\alpha)S_k^{2x}(\alpha)\big|d\alpha\Big)^{\frac{1}{x}}
\notag
\\ &\times \Big(\int_0^1\big|S_2^2(\alpha)S_3^2(\alpha)S_6^2(\alpha)\big|d\alpha\Big)^{1-\frac{4}{3x}}
\notag
\\ \ll &  N^{1+\frac{2}{k}-\frac{1}{54x}+\varepsilon}.
\end{align}
By (\ref{thm1k6}), (\ref{thm1k7}) and (\ref{thm1k8}), we have
\begin{align}\label{thm1236kn}
\int_{\mathfrak{n}} & \big|S_2^2(\alpha)S_3^2(\alpha)S_6^2(\alpha)S_k^2(\alpha)\big|d\alpha\ll N^{1+\frac{2}{k}-\theta_1(k)+\varepsilon}.
\end{align}
Thus, it establishes (\ref{thm1236k}) by (\ref{thm1236km}) and (\ref{thm1236kn}). Hence, Theorem \ref{Thm236k} holds.

\vskip9mm

\section{Proof of Theorem 1.2}

Suppose that $N$ is a large positive integer. Let $S_k(\alpha)$ be defined as in Lemma \ref{LemHuaBou}. Let
$$
r(n)=\sum_{\substack{p_1^2+p_2^4+p_3^4+p_4^k=n\\ N/4< p_1^2, p_2^4, p_3^4, p_4^k\le N}}
(\log p_1)(\log p_2)(\log p_3)(\log p_4),
$$
where $p_1$, $p_2$, $p_3$, $p_4$ are primes
and the major arcs $\mathfrak{M}$, minor arcs $\mathfrak{m}$, $\mathfrak{N}$ and $\mathfrak{n}$ be defined as in section 3.
Then
the weighted number of representations of $n$ in the form of (\ref{244k}) equals
$$r(n)=\int_{0}^{1}S_2(\alpha)S_4^2(\alpha)S_k(\alpha)e(-n\alpha)d\alpha=\int_{\mathfrak{M}}+\int_{\mathfrak{m}}.$$
Whenever $\mathfrak{B}\subset [0,1]$ is measurable, we put
$$r_{\mathfrak{B}}(n, N)=\int_{\mathfrak{B}}S_2(\alpha)S_4^2(\alpha)S_k(\alpha)e(-n\alpha)d\alpha.$$
Then we have
$$r(n)=r_{[0,1]}(n, N)=\int_ 0^1S_2(\alpha)S_4^2(\alpha)S_k(\alpha)e(-n\alpha)d\alpha.$$
Next we will deal with the integral of major arcs and minor arcs respectively. Applying the now standard methods of
enlarging major arcs (c.f. \cite{LiuZhan}), we can get the following result:
\begin{lemma}\label{888q}
For all even integer $n$ with $N<n \le 2N$, one has $r_{\mathfrak{M}}(n, N)\gg N^{\frac{1}{k}}$.
\end{lemma}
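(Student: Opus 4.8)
The plan is to carry out the standard major-arc analysis of the Hardy--Littlewood method, showing that $r_{\mathfrak{M}}(n,N)$ is, up to an admissible error, a product of a singular series $\mathfrak{S}(n)$ and a singular integral $J(n)$, and then bounding each factor from below. The target $N^{1/k}$ is exactly what the heuristic predicts, since the relevant exponents satisfy $\tfrac12+\tfrac14+\tfrac14+\tfrac1k-1=\tfrac1k$. Because $Q=N^{2/(5k)}$ is a genuine (small) power of $N$, rather than a power of $\log N$, I would follow the enlarging-major-arcs technique of Liu and Zhan \cite{LiuZhan}: first establish the asymptotic formula on narrow major arcs (moduli up to a fixed power of $\log N$), where the prime number theorem in arithmetic progressions applies in Siegel--Walfisz form, and then enlarge these to $\mathfrak{M}(Q)$ using a mean-value estimate to absorb the contribution of the enlarged part.

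Concretely, for $\alpha=a/q+\beta\in\mathfrak{M}$ with $(a,q)=1$, $q\le Q$, $|\beta|\le QN^{-1}$, I would replace each $S_j(\alpha)$ by its principal approximation
$$S_j(\alpha)=\frac{C_j(q,a)}{\varphi(q)}\,v_j(\beta)+O\big(E_j\big),\qquad C_j(q,a)=\sum_{\substack{h=1\\(h,q)=1}}^{q}e\Big(\frac{ah^j}{q}\Big),\quad v_j(\beta)=\sum_{N/4<m\le N}\frac{1}{j}m^{\frac1j-1}e(\beta m),$$
for $j\in\{2,4,k\}$, the error $E_j$ being controlled on narrow arcs by Siegel--Walfisz and on the enlarged part by the cited technique. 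Substituting the four approximations (one each of degrees $2$ and $k$, two of degree $4$) and integrating termwise factorizes the main term as $\mathfrak{S}(n,Q)\,J(n,Q)$, where
$$\mathfrak{S}(n,Q)=\sum_{q\le Q}\frac{1}{\varphi(q)^4}\sum_{\substack{a=1\\(a,q)=1}}^{q}C_2(q,a)\,C_4(q,a)^2\,C_k(q,a)\,e\Big(-\frac{na}{q}\Big),\qquad J(n,Q)=\int_{|\beta|\le QN^{-1}}v_2(\beta)v_4(\beta)^2v_k(\beta)e(-n\beta)\,d\beta.$$
Using the standard bounds $v_j(\beta)\ll N^{1/j}(1+N|\beta|)^{-1}$ and multiplicative estimates for the normalized complete sums, I would complete both the integral and the series to their full ranges, showing the tails are negligible and obtaining $r_{\mathfrak{M}}(n,N)=\mathfrak{S}(n)J(n)\big(1+o(1)\big)$.

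It then remains to bound the two factors below. The singular integral is handled by a change of variables reducing it to a count of real solutions of $x_1+x_2+x_3+x_4=n$ weighted by the densities $m^{1/j-1}$, which yields $J(n)\gg N^{1/k}$ by the exponent count above. The singular series is multiplicative, $\mathfrak{S}(n)=\prod_p\sigma_p(n)$, and the Gauss-sum bounds give absolute convergence together with $\sigma_p(n)=1+O(p^{-1-\delta})$ for all but finitely many $p$. I expect the main obstacle to be the uniform lower bound $\mathfrak{S}(n)\gg1$, which requires verifying that every local factor $\sigma_p(n)$ is positive and bounded away from zero. This amounts to checking $p$-adic solubility of $p_1^2+p_2^4+p_3^4+p_4^k\equiv n$ for each $p$; the delicate case is $p=2$, where the hypothesis that $n$ is even supplies precisely the necessary congruence condition guaranteeing $\sigma_2(n)>0$. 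Combining $\mathfrak{S}(n)\gg1$ with $J(n)\gg N^{1/k}$ establishes $r_{\mathfrak{M}}(n,N)\gg N^{1/k}$, as claimed.
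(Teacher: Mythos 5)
Your outline is exactly the route the paper itself gestures at: the paper gives no actual proof of this lemma, only the sentence ``Applying the now standard methods of enlarging major arcs (c.f. Liu--Zhan), we can get the following result,'' and your plan (narrow major arcs via Siegel--Walfisz, Liu--Zhan enlargement to $Q=N^{2/(5k)}$, factorization into $\mathfrak{S}(n)J(n)$, completion of the series and the integral, $J(n)\gg N^{1/k}$ from the exponent count $\tfrac12+\tfrac14+\tfrac14+\tfrac1k-1=\tfrac1k$) is the standard fleshing-out of that citation. Up to the point of the singular series, everything you write is correct and is what the cited technique delivers.

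There is, however, a genuine gap in the last step, and it is concentrated exactly where you predicted the difficulty would be: the claim that ``$n$ even'' guarantees $\sigma_2(n)>0$. This is true when $k$ is odd (then $x\mapsto x^k$ is a bijection on the odd residues modulo $2^t$, so $p_4^k$ can absorb any odd residue and evenness of $n$ is the only constraint), but it is \emph{false} when $k$ is even. In that case all four exponents $2,4,4,k$ are even, and since the summation conditions $N/4<p_i^{j}\le N$ force every $p_i$ to be odd, we have $p_1^2\equiv p_2^4\equiv p_3^4\equiv p_4^k\equiv 1 \pmod 8$, hence $p_1^2+p_2^4+p_3^4+p_4^k\equiv 4 \pmod 8$. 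So for even $n\not\equiv 4\pmod 8$ the congruence has no admissible solutions, $\sigma_2(n)=0$, and $r_{\mathfrak{M}}(n,N)\gg N^{1/k}$ cannot hold. This is not something your argument (or any argument) can repair: the lemma as stated, and indeed Theorem 1.2 read literally, is false for even $k$ (e.g.\ $k=4,6$); the correct formulation must restrict to $n$ satisfying the necessary local condition $n\equiv 4\pmod 8$ when $k$ is even, as the phrase ``subject to the necessary local conditions'' in the paper's abstract suggests was intended. Your write-up should either prove the lemma only for odd $k$, or state and use the corrected congruence condition in place of mere evenness.
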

\begin{lemma}\label{l1501}
For $\alpha\in \mathfrak{M}(2K)\backslash\mathfrak{M}(K)$, $N^{\frac{2}{5k}}\ll K\ll N^{\frac{1}{8}}$, one has
\begin{align*}
&S_2(\alpha)\ll N^{\frac{1}{2}+\varepsilon}K^{-\frac{1}{2}},\\
&S_3(\alpha)\ll N^{\frac{1}{3}+\varepsilon}K^{-\frac{1}{2}},\\
&S_4(\alpha)\ll N^{\frac{11}{80}+\varepsilon}K^{\frac{1}{2}}+N^{\frac{1}{4}+\varepsilon}K^{-\frac{1}{2}},\\
&S_5(\alpha)\ll N^{\frac{11}{100}+\varepsilon}K^{\frac{1}{2}}+N^{\frac{1}{5}+\varepsilon}K^{-\frac{1}{2}}.
\end{align*}
\end{lemma}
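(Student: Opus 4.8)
The plan is to treat Lemma \ref{l1501} as a family of hybrid major-arc estimates for Weyl sums over primes: feed a rational approximation to $\alpha$ into the standard bounds (the very ones already used in Lemma \ref{nS2S3S5}) and then specialise to the annulus. First I would fix the parametrisation. For $\alpha \in \mathfrak{M}(2K)\backslash\mathfrak{M}(K)$ there is a reduced fraction $a/q$ with $1\le q\le 2K$ and $|q\alpha-a|\le 2K/N$, while $\alpha\notin\mathfrak{M}(K)$ forces $\max\{q,\,N|q\alpha-a|\}>K$. Since both quantities are also $\le 2K$, this yields
$$
q+N|q\alpha-a|\asymp K,
$$
which is the only feature of the annulus I will use. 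Crucially $q\le 2K\ll N^{1/8}$, so $\alpha$ sits well inside the range where the cited estimates operate in their main-term regime.

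For $S_2$ and $S_3$ I would quote Theorem 1 of Kumchev \cite{Kumchev} and Lemma 2.3 of Zhao \cite{Zhao2} in the shape
$$
S_k(\alpha)\ll \frac{N^{1/k+\varepsilon}}{(q+N|q\alpha-a|)^{1/2}}+(\text{secondary term}),
$$
and observe that, because $q\ll N^{1/8}$, the strong mean-value savings available for squares and cubes make the secondary term negligible against the main term throughout the whole range. Substituting $q+N|q\alpha-a|\asymp K$ then gives $S_2(\alpha)\ll N^{1/2+\varepsilon}K^{-1/2}$ and $S_3(\alpha)\ll N^{1/3+\varepsilon}K^{-1/2}$, exactly as claimed.

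For $S_4$ and $S_5$ the same machinery applies, but the weaker mean-value information for fourth and fifth powers prevents the secondary contribution from being absorbed. The bound then takes the genuinely two-term form
$$
S_k(\alpha)\ll N^{\phi_k+\varepsilon}(q+N|q\alpha-a|)^{1/2}+N^{1/k+\varepsilon}(q+N|q\alpha-a|)^{-1/2},
$$
which on the annulus becomes $N^{\phi_k+\varepsilon}K^{1/2}+N^{1/k+\varepsilon}K^{-1/2}$ with $\phi_4=\tfrac{11}{80}$ and $\phi_5=\tfrac{11}{100}$. Since $\phi_k>\tfrac1k-\tfrac18$ for $k=4,5$, the growing term dominates near the outer edge $K\asymp N^{1/8}$, and this is precisely why it cannot be dropped from the statement.

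The main obstacle is pinning down the growing-term exponents $\tfrac{11}{80}$ and $\tfrac{11}{100}$. These do not follow from the crude Weyl bounds that suffice for $S_2$ and $S_3$; they require the analysis underlying Kumchev's theorem, namely Vaughan's identity to reduce $S_k(\alpha)$ to type I and type II bilinear sums, followed by Hua-type mean-value estimates together with large-sieve or large-values input for the associated Dirichlet polynomials, executed carefully enough to extract the precise savings for $k=4$ and $k=5$. A secondary but necessary check is that, uniformly for $q\le 2K\ll N^{1/8}$, the explicit-formula error terms of Siegel--Walfisz type are genuinely negligible, so that the two displayed terms really are dominant.
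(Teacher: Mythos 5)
Your reduction of the annulus condition to $q+N|q\alpha-a|\asymp K$ is correct, and your treatment of $S_2$ survives, but the argument for $S_3$ has a genuine gap. The cubic estimate you propose to quote (Lemma 2.3 of Zhao \cite{Zhao2}, exactly as it is used in Lemma \ref{nS2S3S5}) reads
$$S_3(\alpha)\ll N^{\frac{1}{3}-\frac{1}{36}+\varepsilon}+\frac{N^{\frac{1}{3}+\varepsilon}}{(q+N|q\alpha-a|)^{\frac{1}{2}}},$$
and its first term is \emph{fixed}: it does not shrink as $K$ does. For it to be absorbed into the claimed bound $N^{1/3+\varepsilon}K^{-1/2}$ you need $N^{1/3-1/36}\ll N^{1/3}K^{-1/2}$, i.e.\ $K\ll N^{1/18}$, whereas the lemma must hold for all $K\ll N^{1/8}$; on the range $N^{1/18}\ll K\ll N^{1/8}$ the ``secondary'' term $N^{1/3-1/36}$ in fact dominates the bound you are trying to prove. (The same computation shows why $S_2$ only barely survives: the saving $1/16$ from Kumchev's Theorem 1 exactly matches the edge $K\asymp N^{1/8}$.) So your claim that the fixed secondary terms are negligible ``throughout the whole range'' is false for cubes, and no fixed-saving minor-arc estimate among the results cited in this paper is strong enough — one would need a saving of $1/16$ in the $N$-exponent for cubic Weyl sums over primes, far beyond the $1/36$ available.

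The paper sidesteps this entirely by using one hybrid estimate for all four sums, namely Theorem 2 of Kumchev \cite{Kumchev}: if $1\le q\le H$, $(a,q)=1$, $|q\alpha-a|<HN^{-1}$ with $H\ll N^{1/k}$, then
$$S_k(\alpha)\ll H^{\frac{1}{2}}N^{\frac{11}{20k}+\varepsilon}+\frac{N^{\frac{1}{k}+\varepsilon}}{(q+N|q\alpha-a|)^{\frac{1}{2}}}.$$
Taking $H\asymp K$, the first term is $K^{1/2}N^{11/(20k)}$, which \emph{scales with} $K$: for $k=2$ it is dominated by $N^{1/2}K^{-1/2}$ once $K\ll N^{9/40}$, and for $k=3$ once $K\ll N^{3/20}$, both of which comfortably cover $K\ll N^{1/8}$; for $k=4,5$ both terms are kept, giving precisely the exponents $\frac{11}{80}=\frac{11}{20\cdot 4}$ and $\frac{11}{100}=\frac{11}{20\cdot 5}$. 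This also repairs the other weak point of your proposal: for $S_4,S_5$ you correctly recognize that a two-term hybrid bound is needed, but you propose to re-derive it via Vaughan's identity, mean values, and large-sieve input, leaving that derivation as an unexecuted ``obstacle.'' It is already a citable theorem — the same theorem for every $k$ here — so none of that machinery needs to be rebuilt, and using it uniformly is what makes the $S_3$ case work at all.
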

\begin{proof}
The Theorem 2 in \cite{Kumchev} implies that, if $1\le q\le H, \quad (a,q)=1, \quad |q\alpha-a|<HN^{-1}$
with $H\ll N^{\frac{1}{k}}$, then
\begin{align}\label{lemKumchev}
\sum_{p\sim N^{\frac{1}{k}}}e(\alpha p^k)\ll H^{\frac{1}{2}}N^{\frac{11}{20k}+\varepsilon}+\frac{N^{\frac{1}{k}+\varepsilon}}{(q+N|q\alpha-a|)^{\frac{1}{2}}}.
\end{align}
If $\alpha\in\mathfrak{M}(2K)\backslash \mathfrak{M}(K),\textrm{  }\ N^{\frac{2}{5k}}\ll K\ll N^{\frac{1}{8}}$, then this
lemma clearly follows by (\ref{lemKumchev}).
\end{proof}

\noindent\textit{Proof of Theorem \ref{Thm244k}.}
By Bessel's inequality, we have
\begin{align*}
\sum_{N< n \le 2N} & \Big|\int_{\mathfrak{m}}S_2(\alpha)S_4^2(\alpha)S_k(\alpha)e(-n\alpha)d\alpha\Big|^2
\le
\int_{\mathfrak{m}}\big|S_2^2(\alpha)S_4^4(\alpha)S_k^2(\alpha)\big|d\alpha.
\end{align*}
Thus, to prove Theorem \ref{Thm244k}, it suffices to show that
\begin{align}\label{thm1244k}
\int_{\mathfrak{m}}\big|S_2^2(\alpha)S_4^4(\alpha)S_k^2(\alpha)\big|d\alpha
\ll N^{1+\frac{2}{k}-\theta_2(k)+\varepsilon},
\end{align}
where
$\theta_2(k)$ is defined in Theorem \ref{Thm244k}.

Obviously, we know that
\begin{align*}
\int_{\mathfrak{m}}\big|S_2^2(\alpha)S_4^4(\alpha)S_k^2(\alpha)\big|d\alpha\ll &
\int_{\mathfrak{N}\backslash\mathfrak{M}}\big|S_2^2(\alpha)S_4^4(\alpha)S_k^2(\alpha)\big|d\alpha
\\
 &+ \int_{\mathfrak{n}}\big|S_2^2(\alpha)S_4^4(\alpha)S_k^2(\alpha)\big|d\alpha.
\end{align*}
First, we show that
\begin{align}\label{s1506}
\int_{\mathfrak{N}\backslash\mathfrak{M}}\big|S_2^2(\alpha)S_4^4(\alpha)S_k^2(\alpha)\big|d\alpha\ll
N^{1+\frac{2}{k}-\theta_2(k)+\varepsilon}.
\end{align}
It suffices to prove that
$$\int_{\mathfrak{M}(2K)\backslash\mathfrak{M}(K)}\big|S_2^2(\alpha)S_4^4(\alpha)S_k^2(\alpha)\big|d\alpha\ll
N^{1+\frac{2}{k}-\theta_2(k)+\varepsilon}
$$
for $N^{\frac{2}{5k}}\ll K\ll N^{\frac{1}{8}}$. By Lemma \ref{l1501} and Lemma 5.2 in \cite{HoffmanYu}, we have
\begin{align*}\label{}
\int_{\mathfrak{M}(2K)\backslash \mathfrak{M}(K)} & |  S_2^2(\alpha)S_4^4(\alpha)S_k^{2}(\alpha)|d\alpha
\notag
\\
\ll &
\sup_{\alpha\in \mathfrak{M}(2K)\backslash \mathfrak{M}(K)}|S_2^2(\alpha)S_4^4(\alpha)|
\int_{\mathfrak{M}(2K)}\big|S_k^2(\alpha)\big|d\alpha
\notag
\\ \ll & \frac{N^{1+\varepsilon}}{K}\Big(K^2N^{\frac{11}{20}+\varepsilon}+N^{1+\varepsilon}K^{-2}\Big)
\Big(N^{-1}K(N^{\frac{1}{k}}K+N^{\frac{2}{k}})\Big)
\notag
\\ \ll & N^{\frac{11}{20}+\frac{1}{k}+\varepsilon}K^3+N^{\frac{11}{20}+\frac{2}{k}+\varepsilon}K^2+N^{1+\frac{1}{k}+\varepsilon}K^{-1}+N^{1+\frac{2}{k}+\varepsilon}K^{-2}
\notag
\\ \ll & N^{1+\frac{2}{k}-\frac{4}{5k}+\varepsilon}
\notag
\\ \ll & N^{1+\frac{2}{k}-\theta_2(k)+\varepsilon},
\end{align*}
since $\frac{4}{5k}>\theta_2(k)$ for all $k\ge 4$.

Next we show that
\begin{align}\label{s1505}
 \int_{\mathfrak{n}}\big|S_2^2(\alpha)S_4^4(\alpha)S_k^2(\alpha)\big|d\alpha\ll
 N^{1+\frac{2}{k}-\theta_2(k)+\varepsilon}.
\end{align}
For $k\ge 9$ and $x$ in the form in Theorem \ref{Thm244k}, by Lemma \ref{lemS2x} and Lemma \ref{lemS24x}, one has
\begin{align*}\label{}
\int_{\mathfrak{n}} & \big|S_2^2(\alpha)S_4^4(\alpha)S_k^2(\alpha)\big|d\alpha
\notag
\\ \ll &
\Big(\int_0^1|S_2^2(\alpha)S_4^{4}(\alpha)|d\alpha\Big)^{1-\frac{1}{x}}
\Big(\int_0^1|S_2^2(\alpha)S_4^2(\alpha)S_k^{2x}|d\alpha\Big)^{\frac{1}{x}}\sup_{\alpha\in \mathfrak{m}}|S_4(\alpha)|^{\frac{2}{x}}
\notag
\\ \ll & N^{1+\frac{2}{k}-\frac{1}{48x}+\varepsilon}.
\end{align*}
We use Lemma 3.1 of Zhao \cite{Zhao1} to prove (\ref{s1505}) for $4\le k\le 8$, since the methods are same, we
only give the proof for $k=5$ for simplicity.

For $k=5$, by Lemma 3.1 in \cite{Zhao1} with $g(\alpha)=S_4(\alpha)$ and $h(\alpha)=S_5(\alpha)$, one has
\begin{align}\label{871}
\int_{\mathfrak{n}} & \big|S_2^2(\alpha)S_4^4(\alpha)S_5^2(\alpha)\big|d\alpha
\notag
\\ \ll &
 N^{\frac{1}{4}}J_0^{\frac{1}{4}}\Big(\int_{\mathfrak{n}}\big|S_2^4(\alpha)S_4^6(\alpha)S_5^2(\alpha)\big|d\alpha\Big)^{\frac{1}{4}}
\Big(\int_{\mathfrak{n}}\big|S_2^2(\alpha)S_4^3(\alpha)S_5^2(\alpha)\big|d\alpha\Big)^{\frac{1}{2}}
\notag
\\  &+ N^{\frac{1}{4}(1-2^{-4})+\varepsilon}
\int_{\mathfrak{n}}\big|S_2^2(\alpha)S_4^3(\alpha)S_5^2(\alpha)\big|d\alpha,
\end{align}
where
$$J_0\ll N^{-\frac{3}{5}+\varepsilon}$$
by Lemma 2.2 in \cite{Zhao1}.

By H\"older's inequality, Lemmas \ref{lemS2x} and \ref{lemS23k}, one has
\begin{align}\label{872}
\int_{\mathfrak{n}} &\big|S_2^2(\alpha)S_4^3(\alpha)S_5^2(\alpha)\big|d\alpha
\notag
\\ \ll &
 \Big(\int_{\mathfrak{n}}\big|S_2^2(\alpha)S_4^4(\alpha)S_5^2(\alpha)\big|d\alpha\Big)^{\frac{1}{4}}
\Big(\int_0^1\big|S_2^2(\alpha)S_4^4(\alpha)\big|d\alpha\Big)^{\frac{1}{2}}
\Big(\int_0^1\big|S_2^2(\alpha)S_5^6(\alpha)\big|d\alpha\Big)^{\frac{1}{4}}
\notag
\\ \ll & N^{\frac{4}{5}+\varepsilon}
\Big(\int_{\mathfrak{n}}\big|S_2^2(\alpha)S_4^4(\alpha)S_5^2(\alpha)\big|d\alpha\Big)^{\frac{1}{4}}.
\end{align}
By Lemma \ref{nS2S3S5},
\begin{align}\label{873}
\int_{\mathfrak{n}}\big|S_2^4(\alpha)S_4^6(\alpha)S_5^2(\alpha)\big|d\alpha\ll &
 \sup_{\alpha\in \mathfrak{n}}\big|S_2(\alpha)S_4(\alpha)\big|^2\int_{\mathfrak{n}}\big|S_2^2(\alpha)S_4^4(\alpha)S_5^2(\alpha)\big|d\alpha
 \notag
 \\ \ll & N^{\frac{65}{48}+\varepsilon}
\int_{\mathfrak{n}}\big|S_2^2(\alpha)S_4^4(\alpha)S_5^2(\alpha)\big|d\alpha.
\end{align}

By (\ref{871}),(\ref{872}) and (\ref{873}), we have
\begin{align*}
\int_{\mathfrak{n}}\big|S_2^2(\alpha)S_4^4(\alpha)S_5^2(\alpha)\big|d\alpha\ll
 N^{1+\frac{2}{5}-\frac{1}{48}+\varepsilon}.
\end{align*}
Thus, it establishes (\ref{thm1244k}) by (\ref{s1506}) and (\ref{s1505}).

\vskip9mm

\section{Proof of Theorem 1.3}

Suppose that $N$ is a large positive integer. Let $S_k(\alpha)$ be defined as Lemma \ref{LemHuaBou}.
Denote
$$r(n)=\sum_{\substack{p_1^2+p_2^3+p_3^5+p_4^k=n\\ N/4<p_1^2, p_2^3, p_3^5, p_4^k\le N}}
(\log p_1)(\log p_2)(\log p_3)(\log p_4),$$
and the major arcs $\mathfrak{M}$, minor arcs $\mathfrak{m}$, $\mathfrak{N}$ and $\mathfrak{n}$ be defined as in section 3.
Then
the weighted number of representations of $n$ in the form of (\ref{235k}) equals
$$r(n)=\int_{0}^{1}S_2(\alpha)S_3(\alpha)S_5(\alpha)S_k(\alpha)e(-n\alpha)d\alpha=\int_{\mathfrak{M}}+\int_{\mathfrak{m}}.$$
Whenever $\mathfrak{B}\subset [0,1]$ is measurable, we put
$$r_{\mathfrak{B}}(n, N)=\int_{\mathfrak{B}}S_2(\alpha)S_3(\alpha)S_5(\alpha)S_k(\alpha)e(-n\alpha)d\alpha.$$
Then we have
$$r(n)=r_{[0,1]}(n, N)=\int_ 0^1S_2(\alpha)S_3(\alpha)S_5(\alpha)S_k(\alpha)e(-n\alpha)d\alpha.$$
Next we will deal with the integral of major arcs and minor arcs respectively. Applying the now standard methods of
enlarging major arcs (c.f. \cite{LiuZhan}), we can get the following result:
\begin{lemma}
For all even integer $n$ with $N<n \le 2N$, one has $r_{\mathfrak{M}}(n, N)\gg N^{\frac{1}{30}+\frac{1}{k}}$.
\end{lemma}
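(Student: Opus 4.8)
The plan is to carry out the classical Hardy--Littlewood major arc analysis for the quaternary form, following the enlarging-of-major-arcs technology of Liu and Zhan \cite{LiuZhan}. On $\mathfrak{M}$ each $\alpha$ lies in a unique interval centred at a rational $a/q$ with $(a,q)=1$, $1\le q \le Q = N^{\frac{2}{5k}}$, and $|\beta| \le QN^{-1}$ where $\beta = \alpha - a/q$. The first step is to replace each generating function by its expected major arc approximation
$$
S_j(\alpha) = \frac{C_j(q,a)}{\phi(q)}\, v_j(\beta) + O(E_j),
\qquad
C_j(q,a) = \sum_{\substack{h=1\\(h,q)=1}}^{q} e\!\Big(\frac{a h^j}{q}\Big),
\quad
v_j(\beta) = \int_{(N/4)^{1/j}}^{N^{1/j}} e(\beta t^j)\,dt,
$$
for $j \in \{2,3,5,k\}$. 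For $q \le (\log N)^{B}$ this follows from the prime number theorem in arithmetic progressions (Siegel--Walfisz); the extension to the full range $q \le Q$, where $Q$ is a genuine power of $N$, is precisely the point at which the Liu--Zhan method is invoked, combining mean value estimates with zero-density bounds for Dirichlet $L$-functions to control the error uniformly.

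Inserting these approximations into $r_{\mathfrak{M}}(n,N)$ and multiplying out, the main term separates into a product of a singular series and a singular integral,
$$
r_{\mathfrak{M}}(n,N) = \mathfrak{S}(n)\,\mathfrak{J}(n) + (\text{error}),
\qquad
\mathfrak{J}(n) = \int_{-\infty}^{\infty} v_2(\beta)v_3(\beta)v_5(\beta)v_k(\beta)\, e(-n\beta)\,d\beta,
$$
where $\mathfrak{S}(n)=\prod_p \sigma_p(n)$ assembles the local densities $\sigma_p(n)$ built from the complete sums $C_j(q,a)$. Since $v_j(0)\asymp N^{1/j}$, a routine change of variables gives $\mathfrak{J}(n) \asymp N^{\frac{1}{2}+\frac{1}{3}+\frac{1}{5}+\frac{1}{k}-1} = N^{\frac{1}{30}+\frac{1}{k}}$ whenever $n \asymp N$, which produces the exponent claimed in the lemma.

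It then remains to bound the singular series from below, and this is where the hypothesis that $n$ is even enters. Since $p^j$ is odd unless $p=2$, a representation of an even $n$ by four odd prime powers is unobstructed modulo $2$; for odd primes the four available variables provide enough flexibility that each local factor $\sigma_p(n)$ is positive, and one checks that $\sigma_p(n) = 1 + O(p^{-1-\eta})$ for some fixed $\eta>0$ and all but finitely many $p$, so the Euler product converges and $\mathfrak{S}(n) \gg 1$ uniformly over even $n \in (N,2N]$. Combining the two lower bounds with the error estimate from the enlargement step yields $r_{\mathfrak{M}}(n,N) \gg N^{\frac{1}{30}+\frac{1}{k}}$. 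The main obstacle is this enlargement step: upgrading the pointwise approximation of $S_j$ from logarithmically bounded moduli to moduli as large as $N^{2/(5k)}$ is what forces the appeal to the delicate analytic input of \cite{LiuZhan}, whereas the positivity of the singular series and the evaluation of the singular integral are standard once the approximation is in hand.
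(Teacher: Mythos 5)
Your proposal is correct and follows essentially the same route as the paper: the paper in fact offers no written proof of this lemma, asserting only that it follows from ``the now standard methods of enlarging major arcs'' with a citation to Liu and Zhan \cite{LiuZhan}. Your sketch---pointwise major-arc approximation of each $S_j$, extended from $q\le(\log N)^{B}$ to $q\le N^{2/(5k)}$ by the Liu--Zhan zero-density technology, followed by the evaluation $\mathfrak{J}(n)\asymp N^{\frac{1}{2}+\frac{1}{3}+\frac{1}{5}+\frac{1}{k}-1}=N^{\frac{1}{30}+\frac{1}{k}}$ and the positivity $\mathfrak{S}(n)\gg 1$ for even $n$---is precisely the argument that citation stands for.
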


\noindent\textit{Proof of Theorem \ref{Thm235k}.}
By Bessel's inequality, we have
\begin{align*}
\sum_{N< n \le 2N} & \Big|\int_{\mathfrak{m}}S_2(\alpha)S_3(\alpha)S_5(\alpha)S_k(\alpha)e(-n\alpha)d\alpha\Big|^2
\le
\int_{\mathfrak{m}}\big|S_2^2(\alpha)S_3^2(\alpha)S_5^2(\alpha)S_k^2(\alpha)\big|d\alpha.
\end{align*}
Thus, to prove Theorem \ref{Thm235k}, it suffices to show that
\begin{align}\label{s1507}
\int_{\mathfrak{m}}\big|S_2^2(\alpha)S_3^2(\alpha)S_5^2(\alpha)S_k^2(\alpha)\big|d\alpha\ll
N^{\frac{16}{15}+\frac{2}{k}-\theta_3(k)+\varepsilon},
\end{align}
where
$\theta_3(k)$ is defined in Theorem \ref{Thm235k}. Obviously, one has
\begin{align*}
\int_{\mathfrak{m}}\big|S_2^2(\alpha)S_3^2(\alpha)S_5^2(\alpha)S_k^2(\alpha)\big|d\alpha\ll & \int_{\mathfrak{N}\backslash\mathfrak{M}}\big|S_2^2(\alpha)S_3^2(\alpha)S_5^2(\alpha)S_k^2(\alpha)\big|d\alpha
\\ & +\int_{\mathfrak{n}}\big|S_2^2(\alpha)S_3^2(\alpha)S_5^2(\alpha)S_k^2(\alpha)\big|d\alpha.
\end{align*}
First, we show that
\begin{align}\label{s1508}
\int_{\mathfrak{N}\backslash\mathfrak{M}}\big|S_2^2(\alpha)S_3^2(\alpha)S_5^2(\alpha)S_k^2(\alpha)\big|d\alpha
\ll N^{\frac{16}{15}+\frac{2}{k}-\theta_3(k)+\varepsilon}.
\end{align}
It suffices to prove that
$$\int_{\mathfrak{M}(2K)\backslash\mathfrak{M}(K)}\big|S_2^2(\alpha)S_3^2(\alpha)S_5^2(\alpha)S_k^2(\alpha)\big|d\alpha
\ll N^{\frac{16}{15}+\frac{2}{k}-\theta_3(k)+\varepsilon}$$
with $N^{\frac{1}{5k}}\ll K\ll N^{\frac{1}{8}}$.
By Lemma \ref{l1501} and Lemma 5.2 in \cite{HoffmanYu}, one has
\begin{align*}
\int_{\mathfrak{M}(2K)\backslash\mathfrak{M}(K)} & \big|S_2^2(\alpha)S_3^2(\alpha)S_5^2(\alpha)S_k^2(\alpha)\big|d\alpha
\\
\ll &\frac{ N^{1+\varepsilon}}{K}\frac{N^{\frac{2}{3}+\varepsilon}}{K}(N^{\frac{11}{50}+\varepsilon}K+N^{\frac{2}{5}+\varepsilon}K^{-1})
\Big(N^{-1}K(N^{\frac{1}{k}}K+N^{\frac{2}{k}})\Big)
\\
\ll & N^{\frac{16}{15}+\frac{2}{k}-\frac{4}{5k}+\varepsilon}
\\
\ll & N^{\frac{16}{15}+\frac{2}{k}-\theta_3(k)+\varepsilon},
\end{align*}
since $\frac{4}{5k}>\theta_3(k)$ for all $k\ge 5$. This establishes (\ref{s1508}).

Next, we show that
\begin{align}\label{s1509}
\int_{\mathfrak{n}} & \big|S_2^2(\alpha)S_3^2(\alpha)S_5^2(\alpha)S_k^2(\alpha)\big|d\alpha
\ll N^{\frac{16}{15}+\frac{2}{k}-\theta_3(k)+\varepsilon}.
\end{align}
For $k=5$, by Lemma 3.1 in \cite{Zhao1} with $g(\alpha)=S_3(\alpha)$ and $h(\alpha)=S_5(\alpha)$, one has
\begin{align}\label{x01}
\int_{\mathfrak{n}} & \big|S_2^2(\alpha)S_3^2(\alpha)S_5^4(\alpha)\big|d\alpha
\notag
\\ \ll &
N^{\frac{1}{3}}J_1^{\frac{1}{4}}\Big(\int_0^1\big|S_2^4(\alpha)S_3^2(\alpha)S_5^6(\alpha)\big|d\alpha\Big)^{\frac{1}{4}}
\Big(\int_0^1\big|S_2^2(\alpha)S_3(\alpha)S_5^4(\alpha)\big|d\alpha\Big)^{\frac{1}{2}}
\notag
\\ & + N^{\frac{1}{3}(1-2^{-3})}\int_0^1\big|S_2^2(\alpha)S_3(\alpha)S_5^4(\alpha)\big|d\alpha,
\end{align}
where
$$J_1\ll N^{-\frac{3}{5}+\varepsilon}$$
by Lemma 2.2 in \cite{Zhao1}.
By H\"older's inequality, Lemmas \ref{lemS2x} and \ref{lemS23k}, one has
\begin{align}\label{x02}
\int_{\mathfrak{n}}  \big|S_2^2(\alpha)S_3(\alpha)S_5^4(\alpha)\big|d\alpha
\notag
\ll &
\Big(\int_0^1\big|S_2^2(\alpha)S_3^2(\alpha)S_5^2(\alpha)\big|d\alpha\Big)^{\frac{1}{2}}
\Big(\int_0^1\big|S_2^2(\alpha)S_5^6(\alpha)\big|d\alpha\Big)^{\frac{1}{2}}
\notag
\\ \ll & N^{\frac{17}{15}+\varepsilon}.
\end{align}
Also,
\begin{align}\label{x03}
\int_{\mathfrak{n}}  \big|S_2^4(\alpha)S_3^2(\alpha)S_5^6(\alpha)\big|d\alpha
\notag
\ll &
\sup_{\alpha\in \mathfrak{n}}\big|S_2^2(\alpha)S_5^2(\alpha)\big|\int_0^1\big|S_2^2(\alpha)S_3^2(\alpha)S_5^4(\alpha)\big|d\alpha
\notag
\\ \ll & N^{\frac{19}{15}+\varepsilon}\int_0^1\big|S_2^2(\alpha)S_3^2(\alpha)S_5^4(\alpha)\big|d\alpha.
\end{align}
Thus,by (\ref{x01})-(\ref{x03}), we obtain that
\begin{align*}
\int_{\mathfrak{n}}  \big|S_2^2(\alpha)S_3^2(\alpha)S_5^4(\alpha)\big|d\alpha
\ll N^{1+\frac{7}{15}-\frac{1}{24}+\varepsilon}.
\end{align*}
It establishes (\ref{s1509}) for $k=5$.

For $k=6$, applying H\"older's inequality, Lemmas \ref{lemS2x} and \ref{lemS23k}, one has
\begin{align*}\label{}
\int_{\mathfrak{n}} & \big|S_2^2(\alpha)S_3^2(\alpha)S_5^{2}(\alpha)S_6^2(\alpha)\big|d\alpha
\notag
\\ \ll & \sup_{\alpha\in \mathfrak{n}}\big|S_3(\alpha)\big|^{\frac{8}{9}}\Big(\int_0^1\big|S_2^2(\alpha)S_5^6(\alpha)\big|d\alpha\Big)^{\frac{1}{3}}
\Big(\int_0^1\big|S_2^2(\alpha)S_6^8(\alpha)\big|d\alpha\Big)^{\frac{1}{9}}
\notag
\\ &  \times\Big(\int_0^1\big|S_2^2(\alpha)S_3^2(\alpha)S_6^2(\alpha)\big|d\alpha\Big)^{\frac{5}{9}}
\notag
\\ \ll  & N^{1+\frac{2}{5}-\frac{2}{81}+\varepsilon}.
\end{align*}
It establishes (\ref{s1509}) for $k=6$.

For $k\ge 7$ and $x$ in the form in Theorem \ref{Thm235k}, by Lemmas \ref{lemS2x}, \ref{lemS23k} and \ref{nS2S3S5}, one has
\begin{align*}
\int_{\mathfrak{n}} & \big|S_2^2(\alpha)S_3^2(\alpha)S_5^{2}(\alpha)S_k^2(\alpha)\big|d\alpha
\notag
\\ \ll & \sup_{\alpha\in \mathfrak{n}}\big|S_3(\alpha)\big|^{\frac{1}{x}}\Big(\int_0^1\big|S_2^2(\alpha)S_5^6(\alpha)\big|d\alpha\Big)^{\frac{1}{2x}}
\Big(\int_0^1\big|S_2^2(\alpha)S_3^2(\alpha)S_5^2(\alpha)\big|d\alpha\Big)^{1-\frac{3}{2x}}
\notag
\\ \times & \Big(\int_0^1\big|S_2^2(\alpha)S_3^2(\alpha)S_k^{2x}(\alpha)\big|d\alpha\Big)^{\frac{1}{x}}
\notag
\\ \ll  & N^{\frac{16}{15}+\frac{2}{k}-\frac{1}{36x}+\varepsilon}.
\end{align*}
It establishes (\ref{s1509}) for $k\ge 7$. Hence, (\ref{s1507}) holds by (\ref{s1508}) and (\ref{s1509}),
and it establishes
Theorem \ref{Thm235k}.

\vskip9mm

\section{Proof of Theorem \ref{Thm3333k}}
Suppose that $N$ is a large positive integer. Let $S_k(\alpha)$ be defined as in Lemma \ref{LemHuaBou}. Let
$$r(n)=\sum_{\substack{p_1^3+p_2^3+p_3^3+p_4^3+p_5^k=n\\ N/5< p_1^3, p_2^3, p_3^3, p_4^3, p_5^k\le N}}
(\log p_1)(\log p_2)(\log p_3)(\log p_4)(\log p_5),$$
and the major arcs $\mathfrak{M}=\mathfrak{M}(Q)$, minor arcs $\mathfrak{m}$, $\mathfrak{N}$ and $\mathfrak{n}$ be defined as in section 3
with $Q=N^{\frac{1}{2k}}$.
Then
the weighted number of representations of $n$ in the form of (\ref{3333k}) equals
$$r(n)=\int_{0}^{1}S_3^4(\alpha)S_k(\alpha)e(-n\alpha)d\alpha=\int_{\mathfrak{M}}+\int_{\mathfrak{m}}.$$
Whenever $\mathfrak{B}\subset [0,1]$ is measurable, we put
$$r_{\mathfrak{B}}(n, N)=\int_{\mathfrak{B}}S_3^4(\alpha)S_k(\alpha)e(-n\alpha)d\alpha.$$
Then we have
$$r(n)=r_{[0,1]}(n, N)=\int_ 0^1S_3^4(\alpha)S_k(\alpha)e(-n\alpha)d\alpha.$$
Next we will deal with the integral of major arcs and minor arcs respectively. Applying the now standard methods of
enlarging major arcs (c.f. \cite{LiuZhan}), we can get the following result:
\begin{lemma}
For all odd integer $n$ with $N<n\le 2N$, one has $r_{\mathfrak{M}}(n, N)\gg N^{\frac{1}{k}+\frac{1}{3}}$.
\end{lemma}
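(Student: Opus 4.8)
The plan is to evaluate $r_{\mathfrak{M}}(n,N)$ asymptotically by the standard major-arc analysis of the Hardy--Littlewood method, producing a main term of the shape $\mathfrak{S}(n)\mathfrak{J}(n)$ and showing that this is $\gg N^{\frac{1}{3}+\frac{1}{k}}$ while every error term is of strictly smaller order. Throughout I write $\alpha=a/q+\beta$ for $\alpha\in\mathfrak{M}(Q)$, where $1\le a\le q$, $(a,q)=1$, $q\le Q=N^{\frac{1}{2k}}$ and $|\beta|\le Q/(qN)$. The first step is the local approximation: for each exponent $j\in\{3,k\}$ I would replace $S_j(a/q+\beta)$ by
$$\frac{C_j(q,a)}{\phi(q)}\,v_j(\beta),\qquad C_j(q,a)=\sum_{\substack{h=1\\(h,q)=1}}^{q}e\!\left(\frac{ah^j}{q}\right),\quad v_j(\beta)=\frac{1}{j}\int_{N/5}^{N}u^{\frac1j-1}e(\beta u)\,du,$$
the error being admissible on $\mathfrak{M}$; this is exactly the content of the standard enlarging-of-major-arcs machinery (c.f. \cite{LiuZhan}), available because $Q$ is only a small power of $N$. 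Substituting the four copies of $S_3$ and the single $S_k$ factorizes the integral, up to errors, into the truncated singular series
$$\mathfrak{S}(n,Q)=\sum_{q\le Q}\frac{1}{\phi(q)^5}\sum_{\substack{a=1\\(a,q)=1}}^{q}C_3(q,a)^4\,C_k(q,a)\,e\!\left(-\frac{na}{q}\right)$$
and the truncated singular integral $\mathfrak{J}(n,Q)=\int_{|\beta|\le Q/N}v_3(\beta)^4v_k(\beta)e(-n\beta)\,d\beta$.

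For the singular integral, a change of variables shows that $v_j(0)\sim c_j N^{\frac1j}$ and that the integrand is essentially constant for $|\beta|\ll N^{-1}$ and decays beyond, so that $\mathfrak{J}(n):=\int_{\mathbb{R}}v_3(\beta)^4v_k(\beta)e(-n\beta)\,d\beta$ equals the archimedean density of real solutions of $t_1^3+\cdots+t_4^3+t_5^k=n$ in the relevant boxes. A positivity/volume argument then gives the lower bound $\mathfrak{J}(n)\gg N^{4\cdot\frac13+\frac1k-1}=N^{\frac13+\frac1k}$, while the tail $|\beta|>Q/N$ contributes only $O(N^{\frac13+\frac1k}Q^{-c})$, so the truncation is harmless.

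The crux is the singular series. I would show $\mathfrak{S}(n,Q)=\mathfrak{S}(n)+O(Q^{-c})$ with $\mathfrak{S}(n)=\prod_p\chi_p(n)$ absolutely convergent, and establish the uniform lower bound $\mathfrak{S}(n)\gg 1$ for all odd $n$. The local factor $A(q,n)$ is multiplicative in $q$, and $\chi_p(n)$ measures the density of solutions of $x_1^3+\cdots+x_4^3+x_5^k\equiv n\pmod{p^\nu}$ in reduced residues. Using the trivial bound $C_3(p,a)^4C_k(p,a)=O(p^{5/2})$ together with the cancellation from $e(-na/p)$ one obtains $\chi_p(n)=1+O(p^{-3/2})$, which secures convergence and a positive lower bound for the product provided no $\chi_p(n)$ vanishes. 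The only genuinely delicate verifications are at small primes: for odd primes one uses that four cube variables already force solubility (via Weil-type bounds on the $C_j(p,a)$, delicate at $p\equiv 1\pmod 3$ where the cubic Gauss sums are large), and the obstructive prime $p=2$ is resolved precisely because a sum of five odd prime powers is odd, matching the hypothesis that $n$ be odd. This local analysis is where the real work lies and is the main obstacle.

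Finally I would collect the errors: the approximation error from the first step summed over $\mathfrak{M}(Q)$, together with the two truncation errors, are all $O(N^{\frac13+\frac1k-\delta})$ for some $\delta>0$ thanks to the choice $Q=N^{\frac{1}{2k}}$. Combining the three steps yields $r_{\mathfrak{M}}(n,N)=\mathfrak{S}(n)\mathfrak{J}(n)+O(N^{\frac13+\frac1k-\delta})\gg N^{\frac13+\frac1k}$ for every odd $n$ with $N<n\le 2N$, which is the assertion of the lemma.
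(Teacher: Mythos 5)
Your outline follows exactly the route the paper itself gestures at: the paper's entire ``proof'' of this lemma is a one-line appeal to the standard enlarging-of-the-major-arcs machinery of Liu and Zhan, and your sketch (local approximation of $S_3$ and $S_k$ on $\mathfrak{M}(Q)$, factorization into a truncated singular series and singular integral, the bound $\mathfrak{J}(n)\gg N^{\frac13+\frac1k}$, and error control exploiting $Q=N^{\frac{1}{2k}}$) is precisely that standard argument, at essentially the same level of rigor as the citation.

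The genuine gap is in the singular series, and it is not a gap that can be closed: your claim that ``four cube variables already force solubility'' at every odd prime, and hence that $\mathfrak{S}(n)\gg 1$ for all odd $n$, is false. Weil-type bounds are vacuous at small primes, and at $p=3$ a direct computation shows that cubes of reduced residues modulo $9$ take only the two values $\pm 1$. Consequently, if $3\mid k$, then $x_1^3+x_2^3+x_3^3+x_4^3+x_5^k$ with all $x_i$ prime to $3$ lies in $\{1,3,5,6,8\}$ modulo $9$ when $6\mid k$, and in $\{1,3,4,5,6,8\}$ modulo $9$ when $k\equiv 3 \pmod 6$; in either case every odd $n\equiv 7\pmod 9$ has vanishing local factor at $3$, so $\mathfrak{S}(n)=0$ and the main term degenerates, giving $r_{\mathfrak{M}}(n,N)=o(N^{\frac13+\frac1k})$. (Indeed, for such $n$ one has $r(n)=0$ identically, since the summation ranges $N/5<p_i^3\le N$ exclude $p_i=3$, so the lemma combined with the minor-arc bound would yield a contradiction.) Thus the statement as written is false whenever $3\mid k$ --- already for $k=6$, a case Theorem 1.4 explicitly covers --- and no proof can exist; one must either assume $3\nmid k$ or impose the ``necessary local conditions'' on $n$ that the paper's abstract mentions but the lemma omits. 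For $3\nmid k$ your plan does go through, but the local analysis must be done by direct verification rather than by Weil bounds: cubing is a bijection on units for $p\equiv 2\pmod 3$; Cauchy--Davenport shows four cubes of units cover all of $\Z/p\Z$ for $p\equiv 1\pmod 3$ with $p\ge 13$; and the moduli $7$, $9$, and powers of $2$ (where the oddness of $n$ enters) are checked by hand. That explicit small-prime verification, not the Weil-bound heuristic, is where the real work lies.
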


\noindent\textit{Proof of Theorem \ref{Thm3333k}.}
By Bessel's inequality, we have
\begin{align*}
\sum_{N< n \le 2N} & \Big|\int_{\mathfrak{m}}S_3^4(\alpha)S_k(\alpha)e(-n\alpha)d\alpha\Big|^2
\le
\int_{\mathfrak{m}}\big|S_3^8(\alpha)S_k^2(\alpha)\big|d\alpha.
\end{align*}
Thus, to prove Theorem \ref{Thm3333k}, it suffices to show that
\begin{align}\label{s1701}
\int_{\mathfrak{m}}\big|S_3^8(\alpha)S_k^2(\alpha)\big|d\alpha\ll
N^{\frac{5}{3}+\frac{2}{k}-\theta_4(k)+\varepsilon},
\end{align}
where $\theta_4(k)$ is defined in Theorem \ref{Thm3333k}. Obviously, one has
\begin{align*}
\int_{\mathfrak{m}}\big|S_3^8(\alpha)S_k^2(\alpha)\big|d\alpha\ll  \int_{\mathfrak{N}\backslash\mathfrak{M}}\big|S_3^8(\alpha)S_k^2(\alpha)\big|d\alpha
+ \int_{\mathfrak{n}}\big|S_3^8(\alpha)S_k^2(\alpha)\big|d\alpha.
\end{align*}
First, we show that
\begin{align}\label{s1702}
\int_{\mathfrak{N}\backslash\mathfrak{M}}\big|S_3^8(\alpha)S_k^2(\alpha)\big|d\alpha
\ll N^{\frac{5}{3}+\frac{2}{k}-\theta_4(k)+\varepsilon}.
\end{align}
It suffices to prove that
$$\int_{\mathfrak{M}(2K)\backslash\mathfrak{M}(K)}\big|S_3^8(\alpha)S_k^2(\alpha)\big|d\alpha
\ll N^{\frac{5}{3}+\frac{2}{k}-\theta_4(k)+\varepsilon}$$
with $N^{\frac{1}{2k}}\ll K\ll N^{\frac{1}{8}}$.
By Lemma \ref{l1501} and Lemma 5.2 in \cite{HoffmanYu}, one has
\begin{align*}
\int_{\mathfrak{M}(2K)\backslash\mathfrak{M}(K)}\big|S_3^8(\alpha)S_k^2(\alpha)\big|d\alpha
\ll & N^{\frac{8}{3}+\varepsilon}K^{-4}
\Big(N^{-1}K(N^{\frac{1}{k}}K+N^{\frac{2}{k}})\Big)
\\
\ll & N^{\frac{5}{3}+\frac{2}{k}-\frac{3}{2k}+\varepsilon}
\\
\ll & N^{\frac{5}{3}+\frac{2}{k}-\theta_4(k)+\varepsilon},
\end{align*}
since $\frac{3}{2k}>\theta_4(k)$ for all $k\ge 4$. This establishes (\ref{s1702}).

Next, we show that
\begin{align}\label{s1703}
\int_{\mathfrak{n}} & \big|S_3^8(\alpha)S_k^2(\alpha)\big|d\alpha
\ll N^{\frac{5}{3}+\frac{2}{k}-\theta_4(k)+\varepsilon}.
\end{align}
For $k=4$, by Lemma 3.1 in \cite{Zhao1} with $g(\alpha)=h(\alpha)=S_3(\alpha)$, one has
\begin{align}\label{10310}
\int_{\mathfrak{n}}\big|S_3^8(\alpha)S_4^2(\alpha)\big|d\alpha\ll & N^{\frac{1}{3}}J^{\frac{1}{4}}\Big(\int_{\mathfrak{n}}\big|S_3^{12}(\alpha)S_4^4(\alpha)\big|d\alpha\Big)^{\frac{1}{4}}
\Big(\int_{\mathfrak{n}}\big|S_3^{7}(\alpha)S_4^2(\alpha)\big|d\alpha\Big)^{\frac{1}{2}}
\notag
\\  &+
N^{\frac{1}{3}(1-2^{-3})}\int_{\mathfrak{n}}\big|S_3^{7}(\alpha)S_4^2(\alpha)\big|d\alpha
\end{align}
where $$J\ll N^{-\frac{1}{3}+\varepsilon}$$
by Lemma 2.2 in \cite{Zhao1}.
By H\"older's inequality and Lemma \ref{8233}, one has
\begin{align}\label{10311}
\int_{\mathfrak{n}}\big|S_3^{7}(\alpha)S_4^2(\alpha)\big|d\alpha
&\ll
\Big(\int_0^1\big|S_3^{8}(\alpha)\big|d\alpha\Big)^{\frac{3}{4}}
\Big(\int_0^1\big|S_3^{4}(\alpha)S_4^8(\alpha)\big|d\alpha\Big)^{\frac{1}{4}}\notag\\
&\ll N^{\frac{11}{6}+\varepsilon}.
\end{align}
Also,
\begin{align}\label{10312}
\int_{\mathfrak{n}}\big|S_3^{12}(\alpha)S_4^4(\alpha)\big|d\alpha
&\ll \sup_{\alpha\in \mathfrak{n}}|S_3(\alpha)|^6\Big(\int_0^1\big|S_3^{8}(\alpha)\big|d\alpha\Big)^{\frac{1}{2}}
\Big(\int_0^1\big|S_3^{4}(\alpha)S_4^8(\alpha)\big|d\alpha\Big)^{\frac{1}{2}}\notag\\
&\ll N^{\frac{23}{6}+\varepsilon}.
\end{align}
Thus, by (\ref{10310})-(\ref{10312}), we obtain that
$$\int_{\mathfrak{n}}\big|S_3^8(\alpha)S_4^2(\alpha)\big|d\alpha\ll N^{\frac{5}{3}+\frac{1}{2}-\frac{1}{24}+\varepsilon}.$$
It establishes (\ref{s1703}) for $k=4$.

For $k\ge 5$ and $x$ in the form in Theorem \ref{Thm3333k}, by H\"older's inequality and Lemmas \ref{8233} and \ref{nS2S3S5}, one has
\begin{align*}
\int_{\mathfrak{n}}\big|S_3^8(\alpha)S_k^2(\alpha)\big|d\alpha
\ll &
\sup_{\alpha\in\mathfrak{n}}\big|S_3(\alpha)\big|^{\frac{4}{x}}\big(\int_0^1\big|S_3^8(\alpha)\big|d\alpha\big)^{1-\frac{1}{x}}
\big(\int_0^1\big|S_3^4(\alpha)S_k^{2x}(\alpha)\big|d\alpha\big)^{\frac{1}{x}}
\notag
\\ \ll & N^{\frac{5}{3}+\frac{2}{k}-\theta_4(k)+\varepsilon}.
\end{align*}
This establishes (\ref{s1703}) for $k\ge 5$. Hence, (\ref{s1701}) holds by
(\ref{s1702}) and (\ref{s1703}).Thus it establishes
Theorem \ref{Thm3333k}.

\textbf{Acknowlegements.}
This work is supported by National Natural Foundations of China (No. 11771176) .

\vskip9mm

\vskip4mm
\end{document}